\documentclass[11pt,reqno]{amsart}

\usepackage{amssymb}
\usepackage{amsmath}
\usepackage{amsthm}
\usepackage{mathrsfs}
\usepackage{dsfont}
\usepackage{enumitem}
\usepackage[left=2.7cm,right=2.7cm,top=2.2cm,bottom=2.2cm]{geometry}
\usepackage{mathtools}
\usepackage{hyperref}
\usepackage{aliascnt}
\usepackage[capitalise,noabbrev,nameinlink]{cleveref}
\hypersetup{
	colorlinks=true,
	linkcolor=red
}
\usepackage{xcolor}

\numberwithin{equation}{section}

\newaliascnt{Theorem}{equation}
\newtheorem{Theorem}[Theorem]{Theorem}
\aliascntresetthe{Theorem}
\newaliascnt{Lemma}{equation}
\newtheorem{Lemma}[Lemma]{Lemma}
\aliascntresetthe{Lemma}
\newaliascnt{Corollary}{equation}
\newtheorem{Corollary}[Corollary]{Corollary}
\aliascntresetthe{Corollary}
\newaliascnt{Proposition}{equation}
\newtheorem{Proposition}[Proposition]{Proposition}
\aliascntresetthe{Proposition}
\newaliascnt{Claim}{equation}

\aliascntresetthe{Claim}
\newaliascnt{Question}{equation}

\aliascntresetthe{Question}
\newaliascnt{Conjecture}{equation}

\aliascntresetthe{Conjecture}
\newaliascnt{Remark}{equation}
\newtheorem{Remark}[Remark]{Remark}
\aliascntresetthe{Remark}
\theoremstyle{definition}
\newaliascnt{Definition}{equation}

\aliascntresetthe{Definition}

\crefformat{equation}{(#2#1#3)}
\Crefformat{equation}{(#2#1#3)}
\crefrangeformat{equation}{(#3#1#4)--(#5#2#6)}
\Crefrangeformat{equation}{(#3#1#4)--(#5#2#6)}
\crefmultiformat{equation}{(#2#1#3)}{ and~(#2#1#3)}{, (#2#1#3)}{ and~(#2#1#3)}
\Crefmultiformat{equation}{(#2#1#3)}{ and~(#2#1#3)}{, (#2#1#3)}{ and~(#2#1#3)}

\setlist{nosep}

\DeclareMathOperator{\Tr}{Tr}

\title{Backward Pseudolocality of \texorpdfstring{$G_2$}{G2}-Laplacian Flow}

\author{Weiyong He}
\address{Department of Mathematics, University of Oregon,
Eugene, OR 97403, USA}
\email{whe@uoregon.edu}

\author{Longhuan Yan}
\address{School of Mathematical Sciences, University of Science and Technology of China, No.96, JinZhai Road, Hefei, Anhui}
\email{yanlonghuan@mail.ustc.edu.cn}

\date{\today}

\begin{document}

\begin{abstract}
	In this paper, we establish a new estimate on the time-derivative of pointed Nash entropy and use it to give a new proof of backward pseudolocality for Ricci flow.
	The proof does not rely on Bamler's compactness theorem and structure theory concerning limits of Ricci flows.
	Our methods also work for other reasonable geometric flows, like the $G_2$-Laplacian, at a scale depending on its deviation from Ricci flow and its scalar curvature.
\end{abstract}

\maketitle
\tableofcontents
\bibliographystyle{amsalpha}

\newcommand{\C}{{\mathbb C}}
\newcommand{\Q}{{\mathbb Q}}
\newcommand{\R}{{\mathbb R}}
\newcommand{\Z}{{\mathbb Z}}
\newcommand{\N}{{\mathbb N}}
\newcommand*\norm[1]{\left\lVert#1\right\rVert}
\newcommand*\normm[1]{\lVert#1\rVert}
\newcommand*\abs[1]{\left\lvert#1\right\rvert}
\newcommand*\abss[1]{\lvert#1\rvert}
\newcommand*\inn[2]{\langle#1,#2\rangle}
\renewcommand{\geq}{\geqslant}
\renewcommand{\leq}{\leqslant}

\newcommand{\Ric}{{\mathrm{Ric}}}
\newcommand{\Rm}{{\mathrm{Rm}}}

\section{Introduction}
The primary goal of this paper is to study the $G_2$-Laplacian flow with bounded torsion.
We refer to the foundational paper \cite{lotay2017laplacian} of Lotay--Wei for background and references concerning $G_2$-Laplacian flow.
One of their main results is that on a compact manifold the flow extends once the Ricci curvature is uniformly bounded, and they raised the question whether the flow extends once the torsion, or equivalently the scalar curvature, remains uniformly bounded.
The first step toward this is given by Chen in \cite{chen2018shi}, where he proved a no local collapsing theorem by using Perelman's entropy functional.
To some extent, \cite{chen2018shi} shed light on the possibility of carrying over the entropy approach for Ricci flow to our setting of Laplacian flow with bounded torsion.
Several later developments in this direction are \cite{ishida2024logarithmic,he2026pseudolocality}.
In this paper, we will pursue this idea and prove backward pseudolocality for $G_2$-Laplacian flow with bounded torsion.

\subsection{Backward pseudolocality}
Perelman's pseudolocality \cite[\S 10]{perelman2002entropy} provides a quantitative substitute for strict locality in geometric heat-type equations: sufficiently regular initial local geometry ensures short-time local curvature control.
It is fundamental to the study of Ricci flow.
In applications, one often needs an equally important companion notion called backward pseudolocality, which asserts that local curvature control at a given time extends to a smaller backward parabolic neighborhood.
It can be thought of as the parabolic analogue of regularity of Einstein metrics (e.g. \cref{cor:curvature-regularity}).

The idea of backward pseudolocality dates back to Perelman's work on Ricci flow with surgery in dimension three \cite[Proposition 6.4]{perelman2003ricci}.
Later in \cite{chen2012space}, Chen--Wang formulated and proved backward pseudolocality for noncollapsed Ricci flows with bounded scalar curvature and $L^{n/2}$-curvature energy, with applications to K\"ahler--Ricci flow on Fano surfaces; their subsequent work \cite{chen2020space} developed two-sided and long-time pseudolocality in the polarized K\"ahler setting.
The result \cite[Theorem 1.5]{bamler2015heat} of Bamler--Zhang removes the curvature energy assumption and gives backward pseudolocality under a local scalar curvature upper bound and an initial entropy bound.
In the more recent work \cite{bamler2020structure}, Bamler proved backward pseudolocality for compact Ricci flows using only local curvature and volume noncollapsing assumptions on the terminal time-slice, without a prescribed spacetime scalar curvature bound.

For our purpose of establishing backward pseudolocality for $G_2$-Laplacian flow with uniformly bounded torsion, the seemingly most promising approach is the one in \cite{bamler2015heat}.
Unfortunately, their construction of auxiliary functions relies on the special structure of the evolution equations of Ricci flow and the existence of a substitute for general Laplacian flow remains unclear.\footnote{We mention that, however, for the special case of hypersymplectic flow (a special four-dimensional reduction of $G_2$-Laplacian flow introduced in \cite{fine2018hypersymplectic}), there is indeed such a counterpart \cite{fine2026convergence}.}
The proof of Bamler in \cite{bamler2020structure} depends on his compactness theorem \cite{bamler2020compactness} and structure theory for limits of Ricci flows \cite{bamler2020structure}.
While the former paper carries over to our case up to minor modifications (which is clear in view of \cite{he2026pseudolocality}), many arguments in the latter paper depend on Perelman's Harnack inequality which is not available for $G_2$-Laplacian flow.
Thus although the validity of backward pseudolocality is believed by experts, a rigorous justification remains technically challenging.

Our proof of backward pseudolocality depends on a new estimate concerning the time-derivative of pointed Nash entropy, as well as certain necessary inputs from \cite{he2026pseudolocality}.
We separate backward pseudolocality from \cite{he2026pseudolocality} because of the usage of LLM in the current paper (see \S\ref{sec:usage-of-LLM} for more information).

\subsection{Structure of the paper}
The notation and convention we adopt are summarized in \S\ref{sec:notation-and-convention}.
We state the main results of this paper in \S\ref{sec:statement-of-main-results} and discuss the main ideas in \S\ref{sec:main-ideas}.
In \cref{sec:estimate-on-Nash-entropy}, we prove a new estimate on $\partial_t\mathcal N_{x,t}(\tau)$ and establish improved curvature estimates for almost static geometric flow (\cref{lem:almost-static}(\ref{lem-it:almost-static-curvature-bound})).
In \cref{sec:proof-of-main-theorem}, we combine the previous estimates to prove our backward pseudolocality theorem.
Several analytic estimates and entropy rigidity results that are established in \cite{he2026pseudolocality} and needed in this paper are collected in Appendix \ref{sec:preliminaries}.
Finally, we discuss the backward pseudolocality of Ricci flow in Appendix \ref{sec:Ricci-flow}.

\subsection{Statement of main results}\label{sec:statement-of-main-results}
The main result of this paper is the following backward pseudolocality theorem.
\begin{Theorem}[Backward pseudolocality]\label{thm:backward-pseudolocality}
	For every $\Lambda,\alpha>0$, there exists $\epsilon(\Lambda,\alpha)>0$ such that the following holds.

	Let $(M^7,\varphi(t),g(t))_{t\in I}$ be a Laplacian flow on a compact manifold, $(x_0,t_0)\in M\times I$ be a point, and $r>0$ be a scale such that $[t_0-r^2,t_0]\subset I$.
	Suppose
	\begin{equation*}
		\sup_{M\times[t_0-r^2,t_0]}|T_{\varphi}|^2\leq\Lambda r^{-2},\qquad
		|B(x_0,t_0,r)|\geq\alpha r^7,\qquad
		|\Rm|\leq(\alpha r)^{-2}\text{ over }B(x_0,t_0,r).
	\end{equation*}
	Then $|\Rm|\leq(\epsilon r)^{-2}$ over $P(x_0,t_0;(1-\alpha)r,-(\epsilon r)^2)$.
\end{Theorem}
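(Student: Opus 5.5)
We argue by contradiction combined with a point-picking argument, organized around the pointed Nash entropy $\mathcal N_{x,t}(\tau)$. After parabolic rescaling we may take $r=1$, so $|T_\varphi|^2\le\Lambda$ on $M\times[t_0-1,t_0]$. For Laplacian flow the scalar curvature is $R=-|T_\varphi|^2$, so $|R|\le\Lambda$ on this whole time interval. The deviation of the flow from Ricci flow is controlled by quadratic torsion terms and $\nabla T$. The assumptions at the terminal time, namely $|\Rm|\le\alpha^{-2}$ on $B(x_0,t_0,1)$ and $|B(x_0,t_0,1)|\ge\alpha$, give a lower bound $\mathcal N_{x,t_0}(\tau)\ge-C(\Lambda,\alpha)$ for all $x\in B(x_0,t_0,1-\alpha/2)$ and $\tau\le c(\alpha)^2$. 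In fact the terminal-time geometry is almost Euclidean at small scales, so $\mathcal N_{x,t_0}(\tau)\ge-\delta$ for $\tau\le c(\alpha,\delta)$. This uses the heat kernel and Nash entropy lower bounds of the Appendix, which rely on the bounded-scalar-curvature estimates of \cite{he2026pseudolocality}.

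The key step is to propagate this almost-Euclidean entropy bound backward in time. For this I will use the new estimate on $\partial_t\mathcal N_{x,t}(\tau)$ from \cref{sec:estimate-on-Nash-entropy}. It should say that, for $|R|\le\Lambda$ and a deviation from Ricci flow bounded by the torsion, $|\partial_t\mathcal N_{x,t}(\tau)|\le C(\Lambda)\tau^{-1/2}$ or similar. Integrating over $t\in[t_0-\epsilon^2,t_0]$ then gives $\mathcal N_{x,t}(\tau)\ge-\delta-C\epsilon/\sqrt\tau$ for points $(x,t)$ in the backward parabolic neighbourhood. We also need $x$ to stay inside the ball. This follows from distance distortion under bounded scalar curvature, via the $H_n$-centre and $W_1$-distance estimates of the Appendix. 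Choosing $\tau=\epsilon$ (times a constant), this yields $\mathcal N_{x,t}(\epsilon)\ge-2\delta$ throughout $P(x_0,t_0;(1-\alpha)r,-(\epsilon r)^2)$.

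Next we convert almost-zero entropy into a curvature bound via the $\epsilon$-regularity/entropy rigidity results collected in the Appendix from \cite{he2026pseudolocality}. Suppose the conclusion fails for a sequence $\epsilon_i\to0$. A Perelman-type point-picking argument then produces points where $|\Rm|$ is comparable to its maximum on a backward parabolic neighbourhood of size $|\Rm|^{-1/2}$, and where the pointed entropy at scale $|\Rm|^{-1}$ is still $\ge-2\delta$. After rescaling, the torsion bound becomes $|T|^2\le\Lambda|\Rm|^{-1}\to0$, so the rescaled flows are almost static and almost Ricci-flat. At this point \cref{lem:almost-static}(\ref{lem-it:almost-static-curvature-bound}) supplies improved curvature bounds, including derivative bounds, on these almost-static flows. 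The entropy rigidity statement, that Nash entropy $\ge-\delta$ forces almost-Euclidean geometry, then contradicts $|\Rm|=1$ at the base point. The conclusion is the Laplacian-flow analogue of Perelman's pseudolocality proof, where entropy near $0$ implies curvature $\le$ scale$^{-2}$.

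I expect the main obstacle to be the time-derivative estimate of the Nash entropy with constants uniform in $\tau$ down to the small scale. The difficulty is that, without Perelman's Harnack inequality, $\partial_t\mathcal N$ involves terms of the form $\int(\text{deviation})\cdot(\nabla^2 f+\dots)\,d\nu$. These must be bounded using only $|T|^2\le\Lambda$ together with Hessian integral bounds for the conjugate heat kernel. Integrating by parts to move derivatives off $\nabla T$ is the first thing I would try. A secondary difficulty is ensuring that the point-picking stays inside the region where the entropy lower bound has been established.
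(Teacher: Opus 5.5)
\textbf{Gap 1: the terminal-time entropy bound.} Your first step is where the argument breaks. The pointed Nash entropy $\mathcal N_{x,t_0}(\tau)$ is built from the conjugate heat kernel on $[t_0-\tau,t_0]$. So the bound $\mathcal N_{x,t_0}(\tau)\ge-\delta$ at a uniform scale $\tau\ge c(\alpha,\delta)$ cannot be read off from the single slice $g(t_0)$. No local inflating (\cref{thm:no-local-inflating}) together with the volume bound only gives $\mathcal N\ge-C(\Lambda,\alpha)$. The result that upgrades this to almost-zero entropy, \cref{thm:entropy-continuity}, needs a curvature bound on the backward neighbourhood $P^-(x,t;r)$, which is exactly what you lack.

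In fact this step is equivalent to the theorem. If $\mathcal N_{x,t_0}(\tau)\ge-\lambda$ with $\Lambda\tau$ small, then \cref{thm:refined-epsilon-regularity} at $(x,t_0)$, for the flow restricted to $[t_0-1,t_0]$, already gives $|\Rm|\le\epsilon\tau^{-1}$ on $P(x,t_0;\epsilon^{-1}\sqrt\tau,-(1-\epsilon)\tau)$. Your backward propagation and point-picking steps would then be superfluous, so the whole content sits in the step you treat as routine.

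The paper supplies the missing backward control through Bamler's induction over scales (\cref{lem:induction-over-scales}). In a minimal counterexample, the statement at all smaller scales yields $|\Rm|\le 8\alpha^{-1}r^{-2}$ on $P(x_0,t_0;\tfrac34 r,-\tfrac18 r^2)$. Only then is \cref{thm:entropy-continuity} applicable. \cref{lem:extension} then carries the entropy bound over the long interval $[t_0-(1-\alpha)r^2,t_0]$, with the sharp bound $|\Rm|\le\alpha r^{-2}+\sup_{B}|\Rm|$ needed to close the induction. Smallness of torsion relative to scale comes from passing to small subballs, not from shrinking the time interval.

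\textbf{Gap 2: the derivative estimate is not a priori.} The bound $|\partial_t\mathcal N_{x,t}(\tau)|\le C(\Lambda)\tau^{-1/2}$ is not available for Laplacian flow with only bounded torsion. \cref{prop:derivative-of-Nash-entropy-in-time-variable} controls $\partial_t\mathcal N$ by the Ricci integral $\mathcal I$. For Ricci flow $\mathcal I$ is bounded by $R$ alone, but here $\square R=2|\Ric|^2-\langle\Ric,E\rangle+\delta^2E-\Delta\Tr E$. After integrating the $E$-terms by parts onto the heat kernel, one must bound $\lambda\int\!\!\int|\Phi|\,d\nu\,dt$, where $\int|\Phi|\,d\nu_t$ has a $\tau^{-1}$ part that is not integrable at the basepoint.

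\cref{lem:small-Ricci-curvature-integral} overcomes this with a cutoff $\chi$ vanishing at the basepoint, together with a Ricci bound on $\{\chi<1\}$ and the $\mathcal W$-entropy identity. The cutoff and those local curvature bounds come from $\epsilon$-regularity applied to the very entropy bound being propagated. The derivative estimate therefore lives inside a continuity argument (the $t_*$ argument in \cref{lem:extension}): entropy bound gives curvature bound, which gives a cutoff and small $\mathcal I$, which gives small $\partial_t\mathcal N$, which gives the entropy bound again. You name this as the main obstacle but supply neither the localization nor the bootstrap.
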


Combining \cref{thm:backward-pseudolocality} with the Shi-type estimates, we obtain the following regularity result.
\begin{Corollary}\label{cor:curvature-regularity}
	Under the same assumptions of \cref{thm:backward-pseudolocality}, we have
	\begin{equation*}
		|\nabla^k\Rm|\leq C_k(\Lambda,\alpha)r^{-2-k}\qquad\text{over }B(x_0,t_0,\tfrac{1}{2}r)
	\end{equation*}
	for all $k\geq 0$.
\end{Corollary}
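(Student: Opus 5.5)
The plan is to deduce \cref{cor:curvature-regularity} from \cref{thm:backward-pseudolocality} by interior Shi-type derivative estimates on a backward parabolic neighborhood. By parabolic rescaling we may assume $r=1$: replacing $\varphi$ by $r^{-3}\varphi(t_0+r^2 t)$ gives a Laplacian flow with $g$ scaled by $r^{-2}$. Then $|T|^2\leq\Lambda$ on $[-1,0]$, $|B(x_0,0,1)|\geq\alpha$, and $|\Rm|\leq\alpha^{-2}$ on $B(x_0,0,1)$. The statement is unchanged if $\alpha$ decreases, so we may assume $\alpha\leq\tfrac14$, which gives $(1-\alpha)\geq\tfrac34$. \cref{thm:backward-pseudolocality} then provides $\epsilon=\epsilon(\Lambda,\alpha)$ with
\begin{equation*}
|\Rm|\leq\epsilon^{-2}\quad\text{on }P(x_0,0;\tfrac34,-\epsilon^2).
\end{equation*}

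Next we need control of the torsion and its derivatives on this neighborhood. For the Laplacian flow the relevant curvature-type quantity is $\Rm$ together with $\nabla T$. The Shi-type estimates of Lotay--Wei are local and assume bounds on $\Lambda(x,t)=(|\nabla T|^2+|\Rm|^2)^{1/2}$, so the first task is to bound $\nabla T$. In $G_2$ geometry $\Ric$ is expressed algebraically through $\nabla T$ and $T*T$. More usefully, the second Bianchi-type identity for closed $G_2$-structures writes the curl-type part of $\nabla T$ in terms of $\Rm*\varphi+T*T$. Combined with $d\varphi=0$ (so $T$ lies in $\Omega^2_{14}$), this shows that $\nabla T$ is algebraically determined by $\Rm$ and $T$, up to terms controlled by $|T|^2$. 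Concretely,
\begin{equation*}
|\nabla T|\leq C(|\Rm|+|T|^2).
\end{equation*}
This gives $\Lambda(x,t)\leq C(\Lambda,\alpha)$ on $P(x_0,0;\tfrac34,-\epsilon^2)$, using that $|T|^2\leq\Lambda$ everywhere.

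With these bounds we apply the local Shi estimates for Laplacian flow (Lotay--Wei, in localized form with cutoff functions) on the parabolic cylinder. Since $g(t)$ is uniformly equivalent across the time interval, $B(x_0,0,\tfrac12)$ lies in the ball $B(x_0,t,\tfrac34)$ for each $t\in[-\epsilon^2,0]$, after possibly shrinking $\epsilon$ depending on $\Lambda,\alpha$. The estimates then give
\begin{equation*}
|\nabla^k\Rm|+|\nabla^{k+1}T|\leq C_k(\Lambda,\alpha)\,\epsilon^{-2-k}
\end{equation*}
on $B(x_0,0,\tfrac12)\times[-\epsilon^2/2,0]$. In particular this holds at $t=0$, where $\epsilon$ depends only on $(\Lambda,\alpha)$. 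Scaling back gives $|\nabla^k\Rm|\leq C_k r^{-2-k}$ on $B(x_0,t_0,\tfrac12 r)$.

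The main obstacle is the local (cutoff) version of the Shi estimates. This requires a Bernstein-type quantity such as $(A+|\nabla^{k-1}\Rm|^2)|\nabla^k\Rm|^2$ together with a cutoff $\psi$ built from the distance function. Handling $\partial_t\psi$ needs the metric to evolve at a bounded rate, which holds because $\partial_t g=-2\Ric+T*T$ is bounded. Handling $\Delta\psi$ requires a lower Ricci bound, which is also available on the cylinder. The algebraic bound on $\nabla T$ from the second step must be checked carefully. If it fails, I would instead include $|\nabla T|^2$ as part of the curvature quantity and bound it via its own evolution equation in the same localized maximum-principle argument.
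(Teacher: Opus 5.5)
Your proposal is correct and takes essentially the paper's route: the paper derives the corollary in one line by combining \cref{thm:backward-pseudolocality}, which gives $|\Rm|\leq(\epsilon r)^{-2}$ on $P(x_0,t_0;(1-\alpha)r,-(\epsilon r)^2)$, with local Shi-type derivative estimates for Laplacian flow, exactly as you do after rescaling. Your step bounding $|\nabla T|\leq C(|\Rm|+|T|^2)$ via the $G_2$ Bianchi identity is valid, because $T$ is skew for closed structures and so $\nabla T$ is recovered linearly from its antisymmetrization; the paper leaves this step implicit, but it is what you need to feed Shi estimates phrased in terms of $(|\nabla T|^2+|\Rm|^2)^{1/2}$.
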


The two-sided pseudolocality theorem follows easily from \cref{thm:backward-pseudolocality} and \cref{thm:refined-pseudolocality}.
Its Ricci flow counterpart is stated, for example, in \cite[Theorem 2.14]{fangli2026strong}.
\begin{Theorem}[Two-sided pseudolocality]\label{thm:two-sided-pseudolocality}
	For every $\Lambda,\alpha>0$, there exists
	$\epsilon=\epsilon(\Lambda,\alpha)$ such that the following holds.

	Let $(M^7,\varphi(t),g(t))_{t\in I}$ be a Laplacian flow on a compact manifold, $(x_0,t_0)\in M\times I$ be a point, and $r>0$ be a scale such that $[t_0-r^2,t_0]\subset I$.
	Suppose
	\begin{equation*}
		\sup_{M\times([t_0-r^2,t_0+r^2]\cap I)}|T_\varphi|^2\leq\Lambda r^{-2},\qquad
		|B(x_0,t_0,r)|\geq\alpha r^7,\qquad
		|\Rm|\leq(\alpha r)^{-2}\text{ over }B(x_0,t_0,r).
	\end{equation*}
	Then $|\Rm|\leq(\epsilon r)^{-2}$ on $P(x_0,t_0;\epsilon r)$.
\end{Theorem}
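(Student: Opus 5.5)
We derive the statement from the backward pseudolocality of \cref{thm:backward-pseudolocality} together with forward pseudolocality in the refined form \cref{thm:refined-pseudolocality}. The target neighbourhood $P(x_0,t_0;\epsilon r)$ has a backward part $t\in[t_0-(\epsilon r)^2,t_0]$ and a forward part $t\in[t_0,t_0+(\epsilon r)^2]\cap I$, and we treat them separately. By parabolic rescaling we may assume $r=1$. The torsion hypothesis on $[t_0-1,t_0]$ is exactly the one required by \cref{thm:backward-pseudolocality}. Applying it gives $\epsilon_1=\epsilon_1(\Lambda,\alpha)$ with $|\Rm|\leq\epsilon_1^{-2}$ on $P(x_0,t_0;(1-\alpha)r,-\epsilon_1^2)$. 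We may assume $\alpha<1/2$, so this region contains the backward half of $P(x_0,t_0;\epsilon)$ whenever $\epsilon\leq\epsilon_1$.

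For the forward part we apply \cref{thm:refined-pseudolocality} at the time $t_0$ and at a smaller scale $r'=c(\alpha)$, say $r'=\alpha^2/10$. On $B(x_0,t_0,r')$ we have $|\Rm|\leq\alpha^{-2}\leq (r')^{-2}\cdot(\text{small})$ after rescaling, so the ball is almost Euclidean in curvature. Volume comparison (Bishop--Gromov on the larger ball with the curvature bound) gives $|B(x_0,t_0,r')|\geq c(\alpha)(r')^7$, and in fact the ball is close to Euclidean at a yet smaller scale $r''(\alpha)$. I expect the pseudolocality statement to require either almost-Euclidean isoperimetry or a lower bound on local Nash entropy together with bounded curvature; both follow from the bounded-curvature, noncollapsed ball. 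The torsion bound on $[t_0,t_0+1]\cap I$ supplies the remaining scalar-curvature and torsion control needed in the Laplacian flow setting. \cref{thm:refined-pseudolocality} then gives $|\Rm|\leq(\epsilon_2 r'')^{-2}$ on $P(x_0,t_0;\epsilon_2 r'')$ forward in time, where $\epsilon_2$ depends only on $\Lambda,\alpha$.

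We take $\epsilon=\min\{\epsilon_1,\epsilon_2 r''\}$ and combine the two bounds. One further check is needed: the forward region is defined using distance balls at time $t_0$, so $P(x_0,t_0;\epsilon)$ is a product of the $t_0$-ball with a time interval, consistent with both theorems. If instead the paper's $P$ uses balls measured at varying times, the curvature bound lets us compare distances over time intervals of length $\epsilon^2$, and we shrink $\epsilon$ accordingly.

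The main obstacle is matching the hypotheses of \cref{thm:refined-pseudolocality} exactly. The needed input is that the initial data at $t_0$, namely bounded curvature on a noncollapsed ball, implies the entropy or isoperimetric condition that theorem requires. If that theorem asks for a lower bound on pointed Nash entropy at $(x_0,t_0)$, I would obtain it from the backward bound just proved. On $P(x_0,t_0;(1-\alpha),-\epsilon_1^2)$ we have bounded curvature and the noncollapsed ball, so the standard heat kernel lower bound at scale $\epsilon_1$ yields $\mathcal N_{x_0,t_0}(\epsilon_1^2)\geq -C(\Lambda,\alpha)$. Entropy monotonicity, available up to errors controlled by the torsion, then propagates this bound to nearby points. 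This is exactly why the two-sided statement follows from the backward theorem.
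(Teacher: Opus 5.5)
Your decomposition matches the paper, which proves the theorem in one line from \cref{thm:backward-pseudolocality} and \cref{thm:refined-pseudolocality}, and your backward half is fine. The forward half has a gap. \cref{thm:refined-pseudolocality} is of Perelman type. Started at time $t_0$, its conclusion is only $|\Rm|(x,t)\leq\alpha'/(t-t_0)$ for $x\in B(x_0,t,\alpha'^{-1}\sqrt{t-t_0})$. Both the bound and the region degenerate as $t\downarrow t_0$, since the bound blows up and the radius tends to $0$. So applying it with base time $t_0$ gives nothing uniform on $B(x_0,t_0,\epsilon r)\times[t_0,t_0+(\epsilon r)^2]$, and your conclusion $|\Rm|\leq(\epsilon_2 r'')^{-2}$ on $P(x_0,t_0;\epsilon_2 r'')$ forward in time does not follow. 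Your check of the hypothesis is fine: what is required is the local $\mu$-bound on the balls $B(x_0,t_0,\delta^{-1}\sqrt{t})$, and your almost-Euclidean small balls supply it. But curvature control on the single slice $t_0$ does not by itself remove the $1/(t-t_0)$ singularity. A further argument is needed, and you do not give one.

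The missing step is to use the backward theorem to move the base time of forward pseudolocality into the past.
\begin{itemize}
\item \cref{thm:backward-pseudolocality} gives $|\Rm|\leq(\epsilon_1 r)^{-2}$ on $B(x_0,t_0,(1-\alpha)r)\times[t_0-(\epsilon_1 r)^2,t_0]$. With the torsion bound, this controls $\partial_t g=-2\Ric+E$ there.
\item Hence every slice $t_0-\sigma$ with $\sigma\leq(\epsilon_1 r)^2$ carries a bounded-curvature, noncollapsed ball around $x_0$. That gives an injectivity-radius bound and the local $\mu$-bound of \cref{thm:refined-pseudolocality} at all scales up to some $T=c(\Lambda,\alpha)r^2\leq(\epsilon_1 r)^2$.
\item Take $\sigma=T/2$ and start the forward theorem at $t_0-\sigma$. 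For $t\in[t_0,t_0+T/2]$ you have $t-(t_0-\sigma)\geq\sigma$, so $|\Rm|\leq 2\alpha'/T$ on $B(x_0,t,\alpha'^{-1}\sqrt{\sigma})$, a ball of fixed size.
\item A short distance-distortion argument then converts this to $t_0$-balls of radius $\epsilon r$.
\end{itemize}
This is how the backward theorem enters the forward half. It is not there to supply a bound $\mathcal N_{x_0,t_0}\geq -C$, which \cref{thm:refined-pseudolocality} never asks for.

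An alternative, closer to your last paragraph, also works. The backward bound together with \cref{thm:no-local-inflating} and \cref{thm:entropy-continuity} gives $\mathcal N_{x_0,t_0}(\rho^2)\geq-\lambda$ with $\lambda$ \emph{small} at a small scale $\rho$. Then \cref{thm:refined-epsilon-regularity} bounds the curvature on both sides, because its conclusion region $P(x_0,t_0;\epsilon^{-1}\rho,-(1-\epsilon)\rho^2,\epsilon^{-1}\rho^2)$ extends forward in time. This route requires the entropy to be nearly $0$, not merely bounded below.
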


\subsection{Main ideas}\label{sec:main-ideas}
Let us first restrict ourselves to Ricci flow for motivation and intuition of forward/backward pseudolocality.

Forward pseudolocality is best viewed as a form of rigidity phenomenon concerning entropy functional like Perelman's $\mu$-entropy.
It is also equivalent to the quantitative rigidity of pointed Nash entropy.
See \cite{wang2018local,wang2020local,he2026pseudolocality} for more explanation.

The intuition of backward pseudolocality is that, if the scalar curvature is identically zero, then $\square R=2|\Ric|^2$ implies $\Ric=0$ and the flow is static.
One can think of backward pseudolocality as a quantitative version of the above ``scalar curvature rigidity''.
In differential geometry, a powerful mechanism to handle such quantitative rigidity phenomenon is the compactness argument: one tries to extract a convergent subsequence from an exceptional sequence and apply the rigidity result to the limit.
(This is exactly how Bamler proves backward pseudolocality in \cite{bamler2020structure}.)
Usually, the compactness argument can be replaced by direct quantitative estimates, and this is the origin of the paper.

The key to our direct approach towards backward pseudolocality is a new analytic estimate on $\partial_t\mathcal N_{x,t}(\tau)$.
It allows the smallness of $\mathcal N_{x,t}(\tau)$ to propagate along worldline when combined with the epsilon-regularity result concerning pointed Nash entropy.
Another application of epsilon-regularity then gives the required curvature bound.

\subsection{Notation and convention}\label{sec:notation-and-convention}
\begin{itemize}
	\item (Constants). We allow $C$ to differ from line to line.
	The notation $C(a\mid b)$ denotes a generic function that satisfies $\lim_{a\to 0}C(a\mid b)=0$ for fixed $b$.
	\item (Interval $I$). In this paper, $I$ will always be an interval on which the smooth family of Riemannian metrics is defined.
	\item (Deviation $E$). For such a family $(g(t))_{t\in I}$ on $M^n$, we write
	\begin{equation*}
		E(g):=\partial_tg+2\Ric(g)
	\end{equation*}
	for its deviation from Ricci flow.
	\item (Torsion $T_{\varphi}$). The torsion of a $G_2$-structure $\varphi$ is denoted by $T_{\varphi}$.
	\item (Metric balls). Denote $B(x,t,r):=B_{g(t)}(x,r)$.
	\item (Parabolic neighborhoods). We follow the convention in \cite{bamler2020entropy} of parabolic neighborhoods.
	Namely, for $(x,t)\in M\times I$ and $r,T^+,T^-\geq 0$, we write
	\begin{equation*}
		P(x,t;r,-T^{-},T^{+})
		:=B(x,t,r)\times([t-T^{-},t+T^{+}]\cap I)
	\end{equation*}
	and we may omit $-T^{-}$ or $T^{+}$ if it is zero.
	We will also use the abbreviations
	\begin{align*}
		P(x,t;r)&:=P(x,t;r,-r^2,r^2),  \\
		P^+(x,t;r)&:=P(x,t;r,0,r^2),  \\
		P^-(x,t;r)&:=P(x,t;r,-r^2,0).
	\end{align*}
	\item (Heat operator $\square$). Denote $\square:=\partial_t-\Delta$.
	\item (Heat kernel $K$). Let $M^n$ be a compact $n$-manifold and $(g(t))_{t\in I}$ be a smooth family of Riemannian metrics.
	For any $x,y\in M$ and $s,t\in I$ with $s<t$, we denote by $K(x,t;y,s)$ the heat kernel of $\square$.
	\item (Conjugate heat kernel measure $\nu$). We denote by $\nu_{x,t;s}$ the conjugate heat kernel measure based at $(x,t)$, i.e.,
	\begin{equation*}
		d\nu_{x,t;s}:=K(x,t;\cdot,s)\,dV_{g(s)},\qquad
		\nu_{x,t;t}:=\delta_x.
	\end{equation*}
	\item (The operator $P_{s,t}$). Given $\phi\in C^\infty(M)$, we define
	\begin{equation*}
		P_{s,t}\phi(x):=\int_M  K(x,t;y,s)\phi(y)  \,dV_s(y).
	\end{equation*}
	\item (Ricci curvature integral $\mathcal{I}$). This is defined in \cref{eq:integral-of-Ricci-curvature}.
\end{itemize}
We also recall some basic properties of $P_{s,t}$.
When applied to $\phi\in C^\infty(M)$, we have
\begin{equation}\label{eq:basic-properties-of-heat-propagator}
	P_{s,t}=P_{a,t}\circ P_{s,a},\qquad
	\partial_tP_{s,t}=\Delta_tP_{s,t},\qquad
	\partial_sP_{s,t}=-P_{s,t}\Delta_s
\end{equation}
for all $s<a<t$.
A direct consequence of the last identity is Duhamel's formula
\begin{equation}\label{eq:Duhamel-formula}
	u_t-P_{s,t}u_s
	=\int_s^t  P_{a,t}\square_au_a  \,da,\qquad
	\forall\,u\in C^\infty(M\times I).
\end{equation}

\subsection{Acknowledgments}
Both author thank Professor Bing Wang and Professor Yu Li for helpful discussions on Ricci flow.
The second author is also grateful to Professor Bing Wang for his guidance and constant support.

\subsection{Usage of LLM}\label{sec:usage-of-LLM}
During the preparation of this paper, we used GPT-6 Astra to assist with literature searches, exploration of proof strategies, checking calculations, and refinement of the exposition.
The author takes full responsibility for the mathematical arguments, references, and final text.

\section{An estimate on pointed Nash entropy}\label{sec:estimate-on-Nash-entropy}
In this section, we establish a new estimate on pointed Nash entropy that is key to our proof of backward pseudolocality.
Strictly speaking, it is a combination of the following results.
\begin{itemize}
	\item \cref{prop:derivative-of-Nash-entropy-in-time-variable}: bounding $\partial_t\mathcal N_{x,t}(\tau)$ by the integral of Ricci curvature $\mathcal I$ (defined in \cref{eq:integral-of-Ricci-curvature}).
	\item \cref{lem:small-Ricci-curvature-integral}: smallness of $\mathcal I$ for almost static flow in the presence of suitable cut-off.
	\item \cref{lem:almost-static}(\ref{lem-it:existence-of-cut-off}): existence of cut-off for sufficiently regular almost static flow.
\end{itemize}

Let us start from the commutator of Laplacian and $P_{s,t}$.
It appears naturally in the quantity $\partial_t\mathcal N_{x,t}(\tau)$ and is of independent interest.
Define
\begin{equation}\label{eq:integral-of-Ricci-curvature}
	\mathcal I(x,t;\tau)
	:=\int_{t-\tau}^t  P_{a,t}|\Ric|^2(x)  \,da
	=\int_{t-\tau}^t\int_M  |\Ric|^2  \,d\nu_{x,t;a}\,da.
\end{equation}

\begin{Proposition}\label{prop:commutator}
	Let $(M^n,g(a))_{a\in I}$ be a smooth family of Riemannian metrics on a compact manifold and $s<t$ be such that $[s,t]\subset I$.
	Denote $\tau:=t-s$.
	Suppose $|E(g)|\leq\Lambda$ and $R(g)\geq-\Lambda$ on $M\times[s,t]$ for some $\Lambda>0$.
	Then for any $\phi\in C^\infty(M)$, we have
	\begin{equation*}
		|\Delta_tP_{s,t}\phi-P_{s,t}\Delta_s\phi|
		\leq\left(  \sqrt{2}e^{\Lambda\tau/2}\mathcal I(\cdot,t;\tau)^{1/2}  +\frac{C(\Lambda\tau\mid n)}{\sqrt{\tau}}  \right)
		\left(  P_{s,t}|\nabla\phi|_s^2  \right)^{1/2}.
	\end{equation*}
\end{Proposition}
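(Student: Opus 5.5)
The plan is to write the commutator as a time integral and differentiate along the intermediate time. Fix $\phi$ and set $w_a:=P_{s,a}\phi$ for $a\in[s,t]$, so that $\square w=0$, $w_s=\phi$ and $w_t=P_{s,t}\phi$. Applying Duhamel's formula \cref{eq:Duhamel-formula} to $u_a:=\Delta_a w_a$ gives
\begin{equation*}
	\Delta_tP_{s,t}\phi-P_{s,t}\Delta_s\phi=\int_s^t P_{a,t}\big(\square_a\Delta_a w_a\big)\,da .
\end{equation*}
Since $\square w=0$, the integrand is $P_{a,t}[(\partial_a\Delta_a)w_a]$, where $\partial_a\Delta_a$ is the variation of the Laplacian under $\partial_a g=h:=-2\Ric+E$. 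The standard first variation formula is
\begin{equation*}
	(\partial_a\Delta)w=-\langle h,\nabla^2w\rangle-\big\langle \operatorname{div}h-\tfrac12\nabla\operatorname{tr}h,\nabla w\big\rangle .
\end{equation*}
For the Ricci part, the contracted Bianchi identity $\operatorname{div}\Ric=\frac12\nabla R$ kills the first-order term. What remains is $2\langle\Ric,\nabla^2w\rangle$ plus the $E$-terms $-\langle E,\nabla^2 w\rangle-\langle\operatorname{div}E-\frac12\nabla\operatorname{tr}E,\nabla w\rangle$.

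\textbf{Main term.} By Cauchy--Schwarz, first in space against $\nu_{x,t;a}$ and then in $a$,
\begin{equation*}
	\int_s^t 2P_{a,t}\langle\Ric,\nabla^2w\rangle\,da\leq 2\,\mathcal I(x,t;\tau)^{1/2}\Big(\int_s^tP_{a,t}|\nabla^2w_a|^2\,da\Big)^{1/2}.
\end{equation*}
The Hessian integral is controlled by the Bochner formula. For an exact Ricci flow, $\square|\nabla w|^2=-2|\nabla^2w|^2$; in general there is an extra term $-E(\nabla w,\nabla w)$, which is bounded by $\Lambda|\nabla w|^2$. Duhamel again gives
\begin{equation*}
	2\int_s^tP_{a,t}|\nabla^2w_a|^2\,da\leq P_{s,t}|\nabla\phi|^2+\Lambda\int_s^tP_{a,t}|\nabla w_a|^2\,da .
\end{equation*}
The same Bochner inequality together with Gronwall gives the gradient bound $P_{a,t}|\nabla w_a|^2\leq e^{\Lambda(a-s)}P_{s,t}|\nabla\phi|^2$. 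Combining these yields $\int_s^tP_{a,t}|\nabla^2 w|^2\,da\leq\frac12e^{\Lambda\tau}P_{s,t}|\nabla\phi|^2$, and hence exactly the first term $\sqrt2e^{\Lambda\tau/2}\mathcal I^{1/2}(P_{s,t}|\nabla\phi|^2)^{1/2}$.

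\textbf{Error terms.} The term $\langle E,\nabla^2w\rangle$ is estimated the same way and contributes at most $\Lambda\sqrt{\tau}\,e^{\Lambda\tau/2}(P_{s,t}|\nabla\phi|^2)^{1/2}$. This is of the form $\frac{C(\Lambda\tau\mid n)}{\sqrt\tau}(\cdots)^{1/2}$.

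The divergence terms contain derivatives of $E$, which are not controlled. I would move these derivatives onto the kernel by integrating by parts in $y$ inside $P_{a,t}=\int K(x,t;y,a)(\cdot)\,dV_a(y)$. This produces terms of the form $\int K|E||\nabla^2w|$, which are treated as above, and terms of the form $\int|E||\nabla_yK||\nabla w|$. For the latter, Cauchy--Schwarz gives
\begin{equation*}
	\int|E||\nabla_yK||\nabla w|\,dV_a\leq\Lambda\Big(\int|\nabla_y\log K|^2\,d\nu_{x,t;a}\Big)^{1/2}\big(P_{a,t}|\nabla w_a|^2\big)^{1/2}.
\end{equation*}

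The main obstacle is the conjugate-heat-kernel gradient bound
\begin{equation*}
	\int|\nabla\log K|^2\,d\nu_{x,t;a}\leq\frac{C(n,\Lambda\tau)}{t-a}.
\end{equation*}
This is the analogue of Perelman's estimate $\int|\nabla f|^2\,d\nu\leq\frac{n}{2\tau}$. I would prove it by differentiating $\int K\log K\,dV$ in $a$; this is a Nash entropy monotonicity computation. The conjugate equation here has a zeroth-order term $\frac12\operatorname{tr}E-R$ coming from the evolution of the volume form. Using $R\geq-\Lambda$ and $|E|\leq\Lambda$, this term only changes constants by factors $e^{C\Lambda\tau}$. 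If this route is awkward, an alternative is to cite the analogous estimate from Appendix~\ref{sec:preliminaries}.

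Granting the bound, the error term integrates to
\begin{equation*}
	\int_s^t\Lambda\,C\,(t-a)^{-1/2}\,da\;e^{\Lambda\tau/2}\big(P_{s,t}|\nabla\phi|^2\big)^{1/2}\leq C\Lambda\sqrt\tau\,\big(P_{s,t}|\nabla\phi|^2\big)^{1/2},
\end{equation*}
which again has the form $\frac{C(\Lambda\tau\mid n)}{\sqrt\tau}(P_{s,t}|\nabla\phi|_s^2)^{1/2}$. Summing the main term and the error terms gives the proposition.
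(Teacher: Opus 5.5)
Your proposal is correct and follows the paper's proof essentially step for step. The shared steps are: Duhamel applied to $\Delta_a P_{s,a}\phi$; the variation formula with the contracted Bianchi identity; integration by parts to move the derivatives off $E$ (in the paper's version the $\langle E,\nabla^2u\rangle$ term cancels, leaving only $E(\nabla f,\nabla u)$ and $\Tr E\,\Delta u$); the Bochner--Gronwall bounds on $P_{a,t}|\nabla u_a|^2$ and $\int P_{a,t}|\nabla^2u_a|^2\,da$; and Cauchy--Schwarz. The kernel gradient bound you single out is exactly \cref{ineq:energy-estimate}, which the paper simply cites. Citing it is the right choice, because a self-contained proof needs a $\mathcal W$-type (second-derivative) monotonicity, not just one derivative of $\int K\log K$.
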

\begin{proof}
	Fix $x\in M$.
	Set $u_a:=P_{s,a}\phi$ and define $f$ by $K(x,t;y,a)=(4\pi(t-a))^{-n/2}e^{-f(y,a)}$.
	Applying \cref{eq:Duhamel-formula} to $\Delta u$ yields
	\begin{equation}\label{eq:Duhamel-formula-applied-to-Laplacian-u}
		\Delta_tP_{s,t}\phi-P_{s,t}\Delta_s\phi
		=\int_s^t  P_{a,t}\square_a(\Delta_au_a)  \,da.
	\end{equation}
	Since $\square u=0$, we have
	\begin{equation*}
		\square(\Delta u)
		=2\langle  \Ric,\nabla^2u  \rangle
		-\langle  E,\nabla^2u  \rangle
		+\langle  \delta E+\tfrac{1}{2}\nabla\Tr E,\nabla u  \rangle.
	\end{equation*}
	Substituting this into \cref{eq:Duhamel-formula-applied-to-Laplacian-u} and integrating by parts, we obtain
	\begin{multline}\label{eq:heat-Laplacian-commutator}
		\Delta_tP_{s,t}\phi-P_{s,t}\Delta_s\phi
		=2\int_s^t  P_{a,t}\langle  \Ric_a,\nabla^2u_a  \rangle_a  \,da  \\
		-\int_s^t  P_{a,t}\Big(  \left(E_a-\tfrac{1}{2}(\Tr E)_ag_a\right)(\nabla f_a,\nabla u_a)+\tfrac{1}{2}(\Tr E)_a\Delta_au_a  \Big)\,da.
	\end{multline}
	Denote $F(a):=P_{a,t}|\nabla u_a|^2(x)$.
	By \cref{eq:basic-properties-of-heat-propagator},
	\begin{align}
		F'(a)=P_{a,t}\square_a|\nabla u_a|^2(x)
		&=P_{a,t}\left(-2|\nabla^2u_a|^2-E(\nabla u_a,\nabla u_a)\right)(x)  \notag  \\
		&\leq-2P_{a,t}|\nabla^2u_a|^2(x)+\Lambda F(a)  \label{ineq:Bochner-formula}.
	\end{align}
	In particular, $F'\leq\Lambda F$ and therefore
	\begin{equation}\label{ineq:estimate-on-F}
		F(a)\leq e^{\Lambda(a-s)}F(s)\leq e^{\Lambda\tau}P_{s,t}|\nabla\phi|_s^2(x)
	\end{equation}
	for all $a\in[s,t]$.
	Moreover, integrating \cref{ineq:Bochner-formula} and using \cref{ineq:estimate-on-F}, we obtain
	\begin{equation}\label{ineq:estimate-on-hessian}
		2\int_s^t  P_{a,t}|\nabla^2u_a|^2(x)  \,da
		\leq e^{\Lambda\tau}P_{s,t}|\nabla\phi|_s^2(x).
	\end{equation}
	The assertion then follows by applying Cauchy--Schwarz inequality to \cref{eq:heat-Laplacian-commutator} and using the bounds \cref{ineq:estimate-on-F}, \cref{ineq:estimate-on-hessian}, \cref{ineq:energy-estimate}.
\end{proof}

The following proposition controls the time-derivative of pointed Nash entropy by the Ricci curvature integral $\mathcal I$.
\begin{Proposition}\label{prop:derivative-of-Nash-entropy-in-time-variable}
	In the setting of \cref{prop:commutator}, we have
	\begin{equation}\label{eq:derivative-of-Nash-entropy-in-time-variable}
		\partial_t\mathcal N_{x,t}(\tau)
		\leq\frac{\sqrt n+C(\Lambda\tau\mid n)}{\sqrt\tau}\mathcal I(x,t;\tau)^{1/2}
		+\frac{C(\Lambda\tau\mid n)}{\tau}
	\end{equation}
	for all $x\in M$.
\end{Proposition}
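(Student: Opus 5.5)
The plan is to differentiate the pointed Nash entropy directly in the base time $t$ with $s:=t-\tau$ held fixed, and to recognise the result as a heat-propagator commutator to which \cref{prop:commutator} applies. Write $K(x,t;y,s)=(4\pi\tau)^{-n/2}e^{-f(y)}$, so that
\begin{equation*}
	\mathcal N_{x,t}(\tau)=-\int_M K\log K\,dV_s-\tfrac n2\log(4\pi\tau)-\tfrac n2 .
\end{equation*}
With $s$ fixed, $\tau=t-s$ varies with $t$. The last two terms contribute $-\frac{n}{2\tau}$ to $\partial_t\mathcal N_{x,t}(\tau)$.

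\emph{Step 1: reduce $\partial_t$ of the first term to a Laplacian.} As a function of $(x,t)$, the kernel $K(\cdot,\cdot;y,s)$ solves the forward heat equation $\partial_tK=\Delta_{t,x}K$. Differentiating under the integral gives
\begin{equation*}
	\partial_t\Big(-\int_M K\log K\,dV_s\Big)=-\int_M \Delta_{t,x}K\,(\log K+1)\,dV_s .
\end{equation*}
Fix the base point $x_0$ and freeze the test function $\phi:=\log K(x_0,t;\cdot,s)=-f-\tfrac n2\log(4\pi\tau)$, which is smooth on $M$ since $s<t$. Then the derivative at $x_0$ equals $-(\Delta_tP_{s,t}\phi)(x_0)$, because $\Delta_tP_{s,t}1=0$ removes both the $+1$ and the additive constant. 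Splitting
\begin{equation*}
	\Delta_tP_{s,t}\phi=P_{s,t}\Delta_s\phi+\big(\Delta_tP_{s,t}\phi-P_{s,t}\Delta_s\phi\big),
\end{equation*}
the first piece is explicit. Since $\nu_{x_0,t;s}=(4\pi\tau)^{-n/2}e^{-f}dV_s$, integration by parts gives $-P_{s,t}\Delta_s\phi(x_0)=\int\Delta f\,d\nu=\int|\nabla f|^2\,d\nu$. Hence
\begin{equation*}
	\partial_t\mathcal N_{x_0,t}(\tau)\le \int_M|\nabla f|^2\,d\nu_{x_0,t;s}-\frac n{2\tau}+\big|\Delta_tP_{s,t}\phi-P_{s,t}\Delta_s\phi\big|(x_0).
\end{equation*}

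\emph{Step 2: control the gradient energy.} I will invoke the energy estimate \cref{ineq:energy-estimate} from the appendix, which I expect to read
\begin{equation*}
	\tau\int_M|\nabla f|^2\,d\nu_{x,t;s}\le \frac n2+C(\Lambda\tau\mid n).
\end{equation*}
This is the analogue of Bamler's bound for Ricci flow, corrected by the deviation $E$ and by $R\ge-\Lambda$. It makes $\int|\nabla f|^2d\nu-\frac n{2\tau}\le C(\Lambda\tau\mid n)/\tau$. Moreover $P_{s,t}|\nabla\phi|_s^2(x_0)=\int|\nabla f|^2d\nu$, so
\begin{equation*}
	\big(P_{s,t}|\nabla\phi|^2_s\big)^{1/2}\le\sqrt{\tfrac{n}{2\tau}}\,\big(1+C(\Lambda\tau\mid n)\big).
\end{equation*}

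\emph{Step 3: apply \cref{prop:commutator}.} Inserting this bound into \cref{prop:commutator}, the commutator is at most
\begin{equation*}
	\Big(\sqrt2e^{\Lambda\tau/2}\,\mathcal I^{1/2}+\frac{C}{\sqrt\tau}\Big)\sqrt{\tfrac n{2\tau}}\,\big(1+C\big).
\end{equation*}
The leading product is $\sqrt2\cdot\sqrt{n/(2\tau)}=\sqrt{n/\tau}$. The factors $e^{\Lambda\tau/2}$ and $1+C$ are absorbed into $\sqrt n+C(\Lambda\tau\mid n)$ in front of $\mathcal I^{1/2}/\sqrt\tau$, and the remaining terms are $C(\Lambda\tau\mid n)/\tau$. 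This yields \cref{eq:derivative-of-Nash-entropy-in-time-variable}.

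The main obstacle is the justification in Step 1. The test function $\phi$ depends on the base point, so one must freeze it at $x_0$ before taking $\Delta_x$; this is legitimate because only the $t$-derivative at the fixed point $x_0$ is needed. One must also verify that differentiating under the integral is allowed, which is fine on a compact manifold for $s<t$. A secondary point is checking that the appendix energy estimate holds with the sharp constant $\frac n2$ up to an error $C(\Lambda\tau\mid n)$ under only $|E|\le\Lambda$ and $R\ge-\Lambda$. The coefficient $\sqrt n$ in the statement depends on that sharp constant.
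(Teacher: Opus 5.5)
Your proposal has a genuine gap: Step 1 computes the wrong derivative. In the statement, $\partial_t\mathcal N_{x,t}(\tau)$ is taken with $\tau$ held fixed, so $s$ and $t$ move together. This is how the paper differentiates (via $f_{x,t+h;s+h}$). It is also the only version usable in the proof of \cref{lem:extension}, where $\mathcal N_{y,s}(\rho^2)$ is propagated along $s$ with $\rho$ fixed. By freezing $s$ you instead compute
\[
D:=\frac{d}{dt}\Big|_{s\text{ fixed}}\mathcal N_{x,t}(t-s)=\partial_t\mathcal N_{x,t}(\tau)+\partial_\tau\mathcal N_{x,t}(\tau).
\]
Using $\partial_s(K\,dV_s)=-\Delta K\,dV_s$, a direct computation gives
\[
\partial_\tau\mathcal N_{x,t}(\tau)=\int_M\big(|\nabla f|^2+R-\tfrac12\Tr E\big)\,d\nu_{x,t;s}-\frac{n}{2\tau}.
\]
The energy estimate bounds this from above by $C(\Lambda\tau\mid n)/\tau$, but there is no matching lower bound. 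For example, take a static compact Ricci-flat manifold and $\tau$ large compared with the squared diameter (with $\Lambda$ tiny so that $\Lambda\tau$ stays small). There $\tau\int|\nabla f|^2\,d\nu\to 0$, so $\partial_\tau\mathcal N\approx-\frac{n}{2\tau}$ while $\partial_t\mathcal N=0$. So your upper bound on $D$, although correct, does not give an upper bound on $\partial_t\mathcal N_{x,t}(\tau)$ with error of the form $C(\Lambda\tau\mid n)/\tau$.

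The repair is to differentiate at fixed $\tau$.
\begin{itemize}
\item The term $\log(4\pi\tau)$ then contributes nothing.
\item Since $s$ also moves, $\partial_sP_{s,t}=-P_{s,t}\Delta_s$ produces $-P_{s,t}\Delta_sf$. This pairs with $\Delta_tP_{s,t}f$ to form the commutator with $\phi=f$.
\item The $\nu$-average of the variation of $f$ is found by differentiating $(4\pi\tau)^{-n/2}\int_Me^{-f}\,dV_s=1$, which gives $-\int(R-\tfrac12\Tr E)\,d\nu$.
\end{itemize}
Equivalently, subtract the formula for $\partial_\tau\mathcal N$ above from your expression for $D$. The terms $\int|\nabla f|^2\,d\nu$ and $\frac{n}{2\tau}$ cancel exactly, leaving
\[
\partial_t\mathcal N_{x,t}(\tau)=\big(\Delta_tP_{s,t}f-P_{s,t}\Delta_sf\big)(x)-\int_M\big(R-\tfrac12\Tr E\big)\,d\nu_{x,t;s}.
\]
The last term is at most $(1+\frac{\sqrt n}{2})\Lambda$, which is of the form $C(\Lambda\tau\mid n)/\tau$. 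Your Step 3 applied to the commutator, using \cref{ineq:energy-estimate} only to bound $P_{s,t}|\nabla f|^2$, then finishes the proof as in the paper. In this corrected form, the sharp constant $\frac n2$ in the energy estimate is needed only for the leading coefficient $\sqrt n$, not for the additive error.
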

\begin{proof}
	Define $f_{x,t;s}(y)$ by $K(x,t;y,s)=(4\pi\tau)^{-n/2}e^{-f_{x,t;s}(y)}$.
	Recall the pointed Nash entropy is defined by $\mathcal N_{x,t}(\tau)=(P_{s,t}f_{x,t;s})(x)-\frac{n}{2}$.
	Denote $\dot{f}_{x,t;s}:=\frac{d}{dh}|_{h=0}f_{x,t+h;s+h}$.
	Then \cref{eq:basic-properties-of-heat-propagator} gives
	\begin{align*}
		\partial_t\mathcal N_{x,t}(\tau)
		&=\frac{d}{dh}\Big|_{h=0}(P_{s+h,t+h}f_{x,t+h;s+h})(x)  \\
		&=(-P_{s,t}\Delta_sf_{x,t;s}+\Delta_tP_{s,t}f_{x,t;s}+P_{s,t}\dot{f}_{x,t;s})(x).
	\end{align*}
	Differentiating
	\begin{equation*}
		(4\pi\tau)^{-n/2}\int_M  e^{-f_{x,t;s}}  \,dV_{s}=1,
	\end{equation*}
	we get $(P_{s,t}\dot{f}_{x,t;s})(x)=(-P_{s,t}(R-\frac{1}{2}\Tr E)_s)(x)$.
	The assertion then follows from \cref{prop:commutator}, \cref{ineq:energy-estimate} and the assumed bound on $E$ and $R$.
\end{proof}

\begin{Remark}
	Clearly, if we assume two-sided bound on scalar curvature (i.e. $|R(g)|\leq\Lambda$), then the bound in \cref{eq:derivative-of-Nash-entropy-in-time-variable} improves to a bound on $|\partial_t\mathcal N_{x,t}(\tau)|$.
	But \cref{eq:derivative-of-Nash-entropy-in-time-variable} is already enough to propagate an entropy lower bound backward (see the proof of \cref{lem:extension}).
\end{Remark}

Along a Ricci flow $(M^n,g(t))_{t\in I}$, we have $\square R=2|\Ric|^2$ and therefore \cref{eq:Duhamel-formula} gives
\begin{equation}\label{eq:Ricci-curvature-integral-in-the-case-of-Ricci-flow}
	2\int_s^t  P_{a,t}|\Ric|^2(x)  \,da
	=R(x,t)-P_{s,t}R_s(x).
\end{equation}
So smallness of the scalar curvature implies smallness of the integral of Ricci curvature $\mathcal I$.
The following lemma generalizes this under suitable assumptions.

\begin{Lemma}\label{lem:small-Ricci-curvature-integral}
	For any $n\in\N$ and $Y,L,\lambda,\kappa\in[0,\infty)$, there exists a constant $C(\lambda,\kappa\mid n,Y,L)<\infty$ such that the following holds.

	Let $(M^n,g(t))_{t\in I}$ be a smooth family of Riemannian metrics on a compact manifold, $(x_0,t_0)\in M\times I$ be a point, and $r>0$ be a scale such that $[t_0-r^2,t_0]\subset I$.
	Suppose $\mathcal N_{x_0,t_0}(r^2)\geq-Y$ and
	\begin{equation*}
		|E(g)|\leq\lambda r^{-2},\qquad
		R(g)\geq-\lambda r^{-2}
	\end{equation*}
	over $M\times[t_0-r^2,t_0]$.
	Assume $\chi:M\times[t_0-r^2,t_0]\to[0,1]$ is a smooth function such that
	\begin{equation*}
		\chi(x_0,t_0)=0,\qquad
		r^2|\partial_t\chi|+r|\nabla\chi|+r^2|\nabla^2\chi|\leq L,\qquad
		|\Ric|\leq\kappa r^{-2}\text{ over }\{\chi<1\}.
	\end{equation*}
	Then
	\begin{equation*}
		\int_{t_0-r^2}^{t_0}\int_M  |\Ric|^2  \,d\nu_{x_0,t_0;t}\,dt
		\leq C(\lambda,\kappa\mid n,Y,L)r^{-2}.
	\end{equation*}
\end{Lemma}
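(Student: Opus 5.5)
The plan is to imitate the Ricci flow identity \cref{eq:Ricci-curvature-integral-in-the-case-of-Ricci-flow}, localized by the cut-off $\chi$. Write $d\nu_t:=d\nu_{x_0,t_0;t}=(4\pi(t_0-t))^{-n/2}e^{-f}\,dV_t$ and $\tau:=t_0-t$. First split
\begin{equation*}
	|\Ric|^2=\chi|\Ric|^2+(1-\chi)|\Ric|^2 .
\end{equation*}
The second term is supported in $\{\chi<1\}$, so it contributes at most $\kappa^2r^{-4}\cdot r^2=\kappa^2r^{-2}$. It therefore remains to bound $\int_{t_0-r^2}^{t_0}\int_M\chi|\Ric|^2\,d\nu_t\,dt$. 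For this I would study $\Phi(t):=\int_M\chi R\,d\nu_t$. Since $\chi(x_0,t_0)=0$ and $\nu_{x_0,t_0;t}\to\delta_{x_0}$, we have $\Phi(t_0)=0$. Since $R\geq-\lambda r^{-2}$ and $\chi\in[0,1]$, we have $\Phi(t_0-r^2)\geq-\lambda r^{-2}$. Hence
\begin{equation*}
	\int_{t_0-r^2}^{t_0}\Phi'(t)\,dt\leq\lambda r^{-2},
\end{equation*}
and it suffices to show that $\Phi'(t)\geq\int\chi|\Ric|^2\,d\nu_t$ minus errors whose time integral is $\leq Cr^{-2}$ (errors may also contain a small multiple of $\chi|\Ric|^2$, to be absorbed).

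Next, compute $\Phi'=\int\square(\chi R)\,d\nu_t$, up to the $\Tr E$ correction coming from $\square^*K=\tfrac12(\Tr E)K$, which is bounded by $\lambda r^{-2}\int\chi|R|\,d\nu$ and is handled as below. For the general family $\partial_tg=-2\Ric+E$, the first variation of scalar curvature gives
\begin{equation*}
	\square R=2|\Ric|^2-\langle E,\Ric\rangle+\delta\delta E-\Delta\Tr E.
\end{equation*}
I would treat the resulting terms in three groups.
\begin{itemize}
	\item The terms $R\,\square\chi$ and $-2\langle\nabla\chi,\nabla R\rangle$ are supported on $\overline{\{\chi<1\}}$, because $\chi\equiv1$ on the interior of $\{\chi=1\}$. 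There $|R|\leq\sqrt n\,\kappa r^{-2}$. After integrating $\langle\nabla\chi,\nabla R\rangle$ by parts against $d\nu$, this yields terms bounded by $\kappa L r^{-4}\bigl(1+r\int|\nabla f|\,d\nu\bigr)$. By \cref{ineq:energy-estimate}, $\int|\nabla f|\,d\nu\leq C\tau^{-1/2}$, which is integrable in $t$, so these contribute $Cr^{-2}$.
	\item The term $-\chi\langle E,\Ric\rangle$ is bounded by $\tfrac14\chi|\Ric|^2+C\lambda^2r^{-4}$, and the first piece is absorbed.
	\item For the second-order terms $\chi(\delta\delta E-\Delta\Tr E)$, I would integrate by parts twice onto $\chi K$. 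This produces terms of the form $E*\nabla^2\chi$, $E*\nabla\chi*\nabla f$ (harmless as above) and the dangerous term
	\begin{equation*}
		\int\chi\,\bigl\langle E-(\Tr E)g,\ \nabla^2f-\nabla f\otimes\nabla f\bigr\rangle\,d\nu .
	\end{equation*}
\end{itemize}

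The main obstacle is this last Hessian term, because naive bounds give $\lambda r^{-2}/\tau$, which is not integrable. My plan is to rewrite
\begin{equation*}
	\nabla^2f=\Bigl(\Ric+\nabla^2f-\tfrac{g}{2\tau}\Bigr)-\Ric+\tfrac{g}{2\tau}.
\end{equation*}
The $-\Ric$ piece is absorbed via $\chi|E||\Ric|\leq\tfrac14\chi|\Ric|^2+C\lambda^2r^{-4}$. The Perelman-type piece is controlled using the monotonicity of pointed Nash entropy for almost-Ricci flows. Concretely, the time integral $\int\!\!\int\tau\,|\Ric+\nabla^2f-\tfrac{g}{2\tau}|^2\,d\nu\,dt$ is bounded by $C(\lambda)+Y$ thanks to $\mathcal N_{x_0,t_0}(r^2)\geq-Y$, which is exactly where $Y$ enters. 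Then Cauchy--Schwarz with weight $\tau^{\pm1/2}$ gives an integrable bound of order $\lambda r^{-2}\bigl(\int\tau^{-1}\bigr)^{1/2}$ on the dyadic pieces. The remaining pieces, $\tfrac{g}{2\tau}$ together with $\nabla f\otimes\nabla f$, should be paired as $\tfrac{1}{2\tau}g-\nabla f\otimes\nabla f$. I would handle this combination using the identity $\int(\Delta f-|\nabla f|^2)\,d\nu=0$ together with the Hessian integral bound (which expresses $\int(\tau|\nabla f|^2-\tfrac n2)\,d\nu$ through $\int\tau(\Delta f-\tfrac{n}{2\tau})\,d\nu$). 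The cut-off $\chi$ is moved across at the cost of further $\nabla\chi$ terms of the harmless type. If this cancellation cannot be made pointwise in time, I would instead integrate over dyadic time intervals $\tau\in[2^{-k-1}r^2,2^{-k}r^2]$ and use the entropy bound on each interval. Collecting all terms then gives $\int\!\!\int\chi|\Ric|^2\,d\nu\,dt\leq C(\lambda,\kappa\mid n,Y,L)r^{-2}$, with $C\to0$ as $\lambda,\kappa\to0$ as required.
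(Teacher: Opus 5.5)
Your architecture matches the paper's:
\begin{itemize}
\item localize $R$ by $\chi$ along $\nu_t$;
\item move both derivatives in $\chi(\delta^2E-\Delta\Tr E)$ onto $\chi K$;
\item isolate $\int_M\chi\langle E-(\Tr E)g,\nabla^2f-df\otimes df\rangle\,d\nu_t$;
\item split $\nabla^2f$ through $\Theta=\Ric+\nabla^2f-\frac{g}{2\tau}-\frac E4$, whose energy $\iint\tau|\Theta|^2\,d\nu\,dt$ is bounded via the $\mathcal W$-identity and $\mathcal N_{x_0,t_0}(r^2)\geq-Y$.
\end{itemize}
There is a genuine gap in closing the estimate near $\tau=0$, the step you flagged yourself. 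Your weighted Cauchy--Schwarz gives
\begin{equation*}
	\lambda r^{-2}\int_{t_0-r^2}^{t_0}\int_M|\Theta|\,d\nu_t\,dt
	\leq\lambda r^{-2}\Big(\iint\tau|\Theta|^2\,d\nu\,dt\Big)^{1/2}\Big(\int_0^{r^2}\frac{d\tau}{\tau}\Big)^{1/2}=\infty .
\end{equation*}
Dyadic splitting does not rescue this: it produces $\sum_ka_k^{1/2}$, while the entropy bound only controls $\sum_ka_k$. The $\frac{g}{2\tau}$ piece is likewise of size $\lambda r^{-2}/\tau$. The proposed cancellation against $df\otimes df$ via $\int_M(\Delta f-|\nabla f|^2)\,d\nu_t=0$ is not available. $E$ is an arbitrary bounded tensor with no derivative bounds, so $E(\nabla f,\nabla f)$ and $\frac{1}{2\tau}\Tr E$ do not cancel, and forcing a cancellation by integration by parts puts derivatives on $E$.

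The missing idea is that the cut-off itself supplies the needed factor of $\tau$. Put $m(t):=\int_M\chi\,d\nu_t$. Since $\frac{d}{dt}\int_Mu\,d\nu_t=\int_M\square u\,d\nu_t$ (no $\Tr E$ correction arises, as $\nu_t$ is the conjugate heat kernel measure of $\square$ itself), you get $|m'|\leq C(n)Lr^{-2}$. Also $\chi(x_0,t_0)=0$ gives $m(t_0)=0$, hence $0\leq m(t)\leq C(n)L\tau r^{-2}$.

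With this you can simply take absolute values in the dangerous term, bounding it by $\lambda r^{-2}\int_M\chi(|\nabla^2f|+|\nabla f|^2)\,d\nu_t$.
\begin{itemize}
\item The localized identity $\int_M\chi(|\nabla f|^2-\Delta f)\,d\nu_t=\int_M\Delta\chi\,d\nu_t$ reduces $|\nabla f|^2$ to $|\nabla^2 f|$ up to $C(n)Lr^{-2}$.
\item $|\nabla^2f|\leq|\Theta|+|\Ric|+\frac{\sqrt n}{2\tau}+\lambda r^{-2}$.
\item The $\tau^{-1}$ piece now contributes $C(n)\,m(t)/\tau\leq C(n)Lr^{-2}$, and the $\Ric$ piece is absorbed as you describe.
\item For the $\Theta$ piece,
\begin{equation*}
	\int_M\chi|\Theta|\,d\nu_t\leq m(t)^{1/2}\Big(\int_M|\Theta|^2\,d\nu_t\Big)^{1/2}
	\leq C(n)L^{1/2}r^{-1}\Big(\tau\int_M|\Theta|^2\,d\nu_t\Big)^{1/2},
\end{equation*}
which is integrable in $t$ by Cauchy--Schwarz against $\iint\tau|\Theta|^2$.
\end{itemize}
So $\chi(x_0,t_0)=0$ must be used twice. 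You use it for $\int_M\chi R\,d\nu_{t_0}=0$; the essential second use is to make the $\chi$-weighted mass of $\nu_t$ vanish linearly in $\tau$.
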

\begin{proof}
	By time-shifting and parabolic rescaling, we may assume without loss of generality that $t_0=0$ and $r=1$.
	Denote $d\nu_t:=d\nu_{x_0,t_0;t}$ and let $f$ be defined by $d\nu=(4\pi\tau)^{-n/2}e^{-f}dV$.
	Set
	\begin{equation*}
		F(t):=\int_M  \chi(x,t)R(x,t)  \,d\nu_t(x).
	\end{equation*}
	Using $\square R=2|\Ric|^2-\langle\Ric,E\rangle+\delta^2E-\Delta\Tr E$ and integrating by parts give
	\begin{equation}\label{ineq:derivative-of-local-scalar-curvature-integral}
		\begin{split}
			F'(t)=&\int_M  \big(  \chi\square R  +(\square\chi)R  -2\langle\nabla\chi,\nabla R\rangle  \big)  \,d\nu_t  \\
			=&\;2\int_M\chi|\Ric|^2\,d\nu_t
			-\int_M\chi\langle E,\Ric\rangle\,d\nu_t
			+\int_M\chi\langle E,\Phi\rangle\,d\nu_t
			+\Psi_2(t)+\Psi_1(t),
		\end{split}
	\end{equation}
	where ($K$ denotes the conjugate heat kernel based at $(x_0,t_0)$)
	\begin{align*}
		\Phi&:=K^{-1}\big(  \nabla^2K-(\Delta K)g  \big),  \\
		\Psi_2(t)&:=\int_M\big(  \langle E,\nabla^2\chi\rangle-(\Tr E)\Delta\chi+R(\partial_t\chi+\Delta\chi)  \big)\,d\nu_t,  \\
		\Psi_1(t)&:=-\int_M\big(  2E(\nabla\chi,\nabla f)+2(R-\Tr E)\langle\nabla\chi,\nabla f\rangle  \big)\,d\nu_t.
	\end{align*}
	Our assumptions imply
	\begin{equation}\label{ineq:direct-estimates}
		\int_M  \chi\langle E,\Ric\rangle  \,d\nu_t
		\leq\tfrac{1}{4}\lambda^2+\int_M  \chi|\Ric|^2  \,d\nu_t,
	\end{equation}
	\begin{equation*}
		|\Psi_2|\leq C(n)L(\lambda+\kappa),\qquad
		|\Psi_1(t)|\leq\frac{C(\lambda,\kappa\mid n,L)}{\sqrt{-t}}.
	\end{equation*}
	Here we use \cref{ineq:energy-estimate} in the last inequality.
	Note also $F(0)=0$, $F(-1)\geq-\lambda$.
	Integrating \cref{ineq:derivative-of-local-scalar-curvature-integral} and substituting the above estimates yield
	\begin{equation*}
		\int_{-1}^{0}\int_M  \chi|\Ric|^2  \,d\nu_t\,dt
		\leq C(\lambda,\kappa\mid n,L)+\lambda\int_{-1}^{0}\int_M\chi|\Phi|\,d\nu_t\,dt.
	\end{equation*}

	We estimate the last integral involving $\Phi$ as follows.
	A direct computation gives $\Phi=-\nabla^2f+df\otimes df+(\Delta f-|\nabla f|^2)g$.
	Moreover, an integration by parts gives
	\begin{equation*}
		\int_M  \chi\left(|\nabla f|^2-\Delta f\right)  \,d\nu_t
		=\int_M  \chi\Delta K  \,dV_t
		=\int_M  \Delta\chi  \,d\nu_t.
	\end{equation*}
	It follows that
	\begin{equation}\label{eq:intermediate-estimate-on-Phi}
		\int_M\chi|\Phi|\,d\nu_t
		\leq C(n)\int_M  \chi\big(  |\nabla^2f|+|\nabla f|^2  \big)  \,d\nu_t
		\leq C(n)\int_M  \chi|\nabla^2f|  \,d\nu_t  +C(n)L.
	\end{equation}
	Denote $\Theta:=\Ric+\nabla^2f-\frac{g}{2\tau}-\frac{E}{4}$.
	Then \cite[(3.7)]{chen2018shi} gives
	\begin{equation*}
		\int_{-1}^{0}\int_M  \left(  2\tau|\Theta|^2-\tfrac{\tau}{8}|E|^2  \right)  \,d\nu_t\,dt
		=\mathcal W_{x_0,t_0}(\tau)|_{t=-1}^0.
	\end{equation*}
	One can show $\lim_{\tau\to 0}\mathcal W_{x_0,t_0}(\tau)=0$ as in \cite{wang2018local}.
	By the definition of $\mathcal W$ and $\mathcal N$, we have
	\begin{equation*}
		\mathcal W_{x_0,t_0}(1)
		=\mathcal N_{x_0,t_0}(1)
		+\int_M\big(|\nabla f|^2+R\big)  \,d\nu_{-1}-\frac{n}{2}
		\geq -Y-\lambda-\frac{n}{2}.
	\end{equation*}
	Therefore
	\begin{equation*}
		\int_{-1}^{0}\int_M  \tau|\Theta|^2  \,d\nu_t\,dt\leq C(n,Y)+C(\lambda\mid n,Y).
	\end{equation*}
	In order to bound the integral of \cref{eq:intermediate-estimate-on-Phi} over $[-1,0]$, we use $|\nabla^2f|\leq C(n)\left(  |\Theta|+|\Ric|+\tfrac{1}{\tau}+\lambda  \right)$.
	The term involving $\Ric$ can be treated (absorbed) as in \cref{ineq:direct-estimates}.
	Consider $m(t):=\int_{M}\chi(x,t)\,d\nu_t(x)$.
	Then it is easy to see
	\begin{equation*}
		m(0)=0,\qquad
		|m'(t)|=\bigg|\int_{M}\square\chi\,d\nu_t\bigg|\leq C(n)L,\qquad
		0\leq m(t)\leq C(n)L\tau.
	\end{equation*}
	This handles the term $\frac{1}{\tau}$.
	Moreover, it also gives
	\begin{equation*}
		\int_{-1}^{0}\int_M  \chi|\Theta|  \,d\nu_t\,dt
		\leq\int_{-1}^{0}\sqrt{m(t)}\left(\int_M\chi|\Theta|^2\,d\nu_t\right)^{1/2}\,dt
		\leq C(n,L)\left(  \int_{-1}^{0}\int_M  \tau|\Theta|^2  \,d\nu_t\,dt  \right)^{1/2}.
	\end{equation*}
	This completes the proof of
	\begin{equation*}
		\int_{-1}^{0}\int_M  \chi|\Ric|^2  \,d\nu_t\,dt
		\leq C(\lambda,\kappa\mid n,Y,L).
	\end{equation*}
	The assertion then follows by writing $1=\chi+(1-\chi)$ and using $|\Ric|\leq\kappa$ over the set $\{\chi<1\}$.
\end{proof}

Later we shall need the following interpolation inequality.
Its Euclidean version is given, for example, in \cite{nirenberg1966extended}.
In our application, we actually have injectivity radius lower bound and bounds on higher order derivatives of curvature, so the interpolation inequality can be obtained by applying the Euclidean one to normal coordinates.
Below we also give a simple direct proof.
\begin{Lemma}\label{lem:interpolation}
	Let $(M^n,g)$ be a complete Riemannian manifold, $E$ be a tensor bundle on $M$, $x_0\in M$ and $r>0$.
	We write $B_\rho:=B(x_0,\rho)$ for simplicity.
	Suppose
	\begin{equation*}
		|\Rm|\leq K\qquad\text{on }B_{2r}
	\end{equation*}
	for some $K<\infty$.
	Then for any $T\in C^4(B_{2r},E)$, we have
	\begin{equation}\label{ineq:interpolation}
		\|\nabla^2T\|_{L^\infty(B_r)}
		\leq C(n,\mathrm{rank}E)\left(  \|T\|_{L^\infty(B_{2r})}^{1/2}  \|\nabla^4T\|_{L^\infty(B_{2r})}^{1/2}  +(r^{-2}+K)\|T\|_{L^\infty(B_{2r})}  \right).
	\end{equation}
\end{Lemma}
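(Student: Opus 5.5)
Since both sides of \cref{ineq:interpolation} scale in the same way under $g\mapsto c^2g$ (with $K\mapsto c^{-2}K$, $r\mapsto cr$), I first normalize $r=1$. I then replace the scale by $\rho:=\min\{1,K^{-1/2}\}$. The strategy is a one-dimensional interpolation along geodesics combined with a crude local elliptic-type argument.

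Fix $x\in B_1$ and a unit vector $v\in T_xM$. Let $\gamma(s)=\exp_x(sv)$ for $|s|\le h$, with $h\le\rho$ to be chosen, so that $\gamma\subset B_2$. Parallel transport along $\gamma$ turns $T$ along $\gamma$ into a vector-valued function $\tau(s)$. Since $\gamma$ is a geodesic, $\tau''(s)$ is the parallel transport of $\nabla^2_{\dot\gamma,\dot\gamma}T$, and $\tau''''$ is the transport of $\nabla^4_{\dot\gamma\dot\gamma\dot\gamma\dot\gamma}T$. The one-variable Landau--Kolmogorov/Taylor estimate on $[-h,h]$ gives
\begin{equation*}
|\tau''(0)|\le C\big(h^{-2}\|\tau\|_\infty+h^2\|\tau''''\|_\infty\big).
\end{equation*}
One way to see this is to write $\tau(h)+\tau(-h)-2\tau(0)=h^2\tau''(0)+O(h^4\|\tau''''\|_\infty)$. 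This yields the diagonal bound
\begin{equation*}
|\nabla^2_{v,v}T(x)|\le C\big(h^{-2}\|T\|_{L^\infty(B_2)}+h^2\|\nabla^4T\|_{L^\infty(B_2)}\big).
\end{equation*}

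Next I recover the full Hessian from its diagonal values. The symmetric part of $\nabla^2T$ is determined by polarization, $\nabla^2_{v,w}+\nabla^2_{w,v}=\nabla^2_{v+w,v+w}-\nabla^2_{v,v}-\nabla^2_{w,w}$. The antisymmetric part is the curvature commutator $\nabla^2_{v,w}T-\nabla^2_{w,v}T=\Rm(v,w)*T$, which is bounded by $C(n,\mathrm{rank}E)K|T|$. Hence
\begin{equation*}
|\nabla^2T(x)|\le C\big(h^{-2}\|T\|+h^2\|\nabla^4T\|+K\|T\|\big).
\end{equation*}
Here and below the norms are over $B_2$.

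Finally I optimize over $h\in(0,1]$. If $(\|T\|/\|\nabla^4T\|)^{1/4}\le1$, I choose $h$ equal to it, which gives the term $\|T\|^{1/2}\|\nabla^4T\|^{1/2}$. Otherwise I take $h=1$, and then $h^2\|\nabla^4T\|\le\|T\|$, which gives the $r^{-2}\|T\|$ term. Note that $h$ need not be $\le\rho$ here, because geodesics of length $\le1$ from $x\in B_1$ stay in $B_2$ by completeness; curvature enters only through the commutator. This proves \cref{ineq:interpolation}.

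The main obstacle is checking that no hidden curvature terms appear in the one-dimensional reduction. This works because parallel transport along a geodesic commutes exactly with iterated $\dot\gamma$-derivatives. The only place $K$ enters is the antisymmetrization step, where the Ricci identity is applied to a tensor bundle and produces $\Rm*T$ with constants depending on $\mathrm{rank}E$.
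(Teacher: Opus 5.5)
Your proposal is correct and is essentially the paper's proof: parallel transport along a unit-speed geodesic, the symmetric second-difference Taylor estimate, polarization for the symmetric part, the Ricci identity for the antisymmetric part, and the choice $h=\min\{1,(A_0/A_4)^{1/4}\}$ after normalizing $r=1$. The paper caps the step at $r/2$ rather than $r$, which changes nothing. Your auxiliary scale $\rho=\min\{1,K^{-1/2}\}$ and the mention of an ``elliptic-type argument'' are never used and can simply be dropped.
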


\begin{proof}
	To simplify notation, we write $A_0:=\|T\|_{L^\infty(B_{2r})}$ and $A_4:=\|\nabla^4T\|_{L^\infty(B_{2r})}$.
	Let $x\in B_r$ be a point, $v\in T_xM$ be a unit vector, and $\ell\in(0,r/2]$.
	Consider $\gamma(s):=\exp_x(sv)$ for $s\in[-\ell,\ell]$ and define $F(s):=P_{\gamma}(T(\gamma(s)))$, where $P_{\gamma}$ denotes parallel transport along
	$\gamma$ to $x$.
	Then $F$ is of the class $C^4$ and $F''(0)=(\nabla^2_{v,v}T)_x$, $|F(s)|\leq A_0$, $|F^{(4)}(s)|\leq A_4$.
	Taylor's formula at $s=0$, applied at $s=\pm\ell$, gives
	\begin{equation*}
		F(\ell)+F(-\ell)-2F(0)=\ell^2F''(0)+\mathcal E,\qquad
		|\mathcal E|\leq\frac{\ell^4}{12}A_4.
	\end{equation*}
	It follows that $|\nabla^2_{v,v}T|\leq 4\ell^{-2}A_0+\frac{\ell^2}{12}A_4$ and polarization therefore yields
	\begin{equation*}
		|(\nabla^2_{X,Y}+\nabla^2_{Y,X})T|
		\leq C(n)(\ell^{-2}A_0+\ell^2A_4)
	\end{equation*}
	for unit vectors $X,Y\in T_xM$.
	Combining this with the Ricci identity $(\nabla^2_{X,Y}-\nabla^2_{Y,X})T=\Rm_{X,Y}T$ and using the assumed curvature bound, we obtain
	\begin{equation*}
		\|\nabla^2T\|_{L^\infty(B_r)}
		\leq C(n,\mathrm{rank}E)\big((\ell^{-2}+K)A_0+\ell^2A_4\big).
	\end{equation*}
	Then \cref{ineq:interpolation} follows by taking
	\begin{equation*}
		\ell=\min\left\{\frac r2,\left(\frac{A_0}{A_4}\right)^{1/4}\right\},
	\end{equation*}
	where the right-hand-side is understood to be $r/2$ when $A_4=0$.
\end{proof}

The next lemma provides the cut-off function needed in \cref{lem:small-Ricci-curvature-integral}.
It also supplies improved higher order curvature estimates for future usage.
\begin{Lemma}\label{lem:almost-static}
	For any $n\in\N$, $v,\lambda>0$ and $A<\infty$, there are $\alpha(n,A)>0$, $C(\lambda\mid n,A)<\infty$ and $L(n,A,v)<\infty$ such that the following holds.

	Let $(M^n,g(t))_{t\in I}$ be a smooth family of Riemannian metrics on a compact manifold, $(x_0,t_0)\in M\times I$ be a point, and $r>0$ be a scale such that $[t_0-r^2,t_0]\subset I$.
	Suppose
	\begin{equation*}
		|E(g)|+|R(g)|\leq\lambda r^{-2},\qquad
		\sum_{j=0}^4r^{2+j}\big(|\nabla^j\Rm|+|\nabla^jE|\big)\leq A
	\end{equation*}
	over $P^{-}(x_0,t_0;r)$.
	\begin{enumerate}
		\item \label{lem-it:almost-static-curvature-bound}
		Over $P^{-}(x_0,t_0;\alpha r)$, we have $|\Ric|\leq C(\lambda\mid n,A)r^{-2}$ and
		\begin{equation}\label{ineq:curvature-Lipschitz-in-time-variable}
			\big||\Rm|(y,t)-|\Rm|(y,s)\big|
			\leq C(\lambda\mid n,A)r^{-4}|t-s|.
		\end{equation}

		\item \label{lem-it:existence-of-cut-off}
		Assume in addition that $|B(x_0,t_0,r)|\geq vr^n$.
		Then there exists a smooth function $\chi:M\times[t_0-r^2,t_0]\to[0,1]$ such that
		\begin{equation*}
			\chi(x_0,t_0)=0,\qquad
			\{\chi<1\}\subset P^{-}(x_0,t_0;\alpha r),\qquad
			r^2|\partial_t\chi|+r|\nabla\chi|+r^2|\nabla^2\chi|\leq L.
		\end{equation*}
	\end{enumerate}
\end{Lemma}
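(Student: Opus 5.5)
The plan is to read the bound on $|\Ric|$ off the evolution equation of $R$, using that $|R|\leq\lambda r^{-2}$ is small while everything else is controlled by $A$. The cut-off is then built from a time-independent distance function, using the volume bound to control the injectivity radius. By parabolic rescaling and time shift I assume $r=1$ and $t_0=0$.

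\textbf{Step 1: preliminary smoothing of the metrics.} Since $\partial_tg=-2\Ric+E$ and $|\Ric|+|E|\leq C(n)A$ on $P^-(x_0,0;1)$, the metrics $g(t)$ are uniformly equivalent there, with factor $e^{C(n,A)|t-s|}$. So for $\alpha=\alpha(n,A)$ small, the ball $B(x_0,t,2\alpha)$ stays inside $B(x_0,0,1)$, and over $P^-(x_0,0;2\alpha)$ all the hypotheses can be applied on the balls of each time slice. This is the only place $\alpha$ enters.

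\textbf{Step 2: proof of (\ref{lem-it:almost-static-curvature-bound}).} I will use the evolution formula from the proof of \cref{lem:small-Ricci-curvature-integral},
\begin{equation*}
	2|\Ric|^2=\partial_tR-\Delta R+\langle\Ric,E\rangle-\delta^2E+\Delta\Tr E .
\end{equation*}
\begin{itemize}
	\item By \cref{lem:interpolation}, applied on $B(y,t,\alpha)$ with $|\Rm|\leq A$, we get $|\nabla^2R|\leq C(n,A)(\lambda^{1/2}+\lambda)$. The bound $|\nabla^2E|\leq C(n,A)(\lambda^{1/2}+\lambda)$ follows the same way from $|E|\leq\lambda$ and $|\nabla^4E|\leq A$. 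Also $|\langle\Ric,E\rangle|\leq A\lambda$.
	\item Hence $2|\Ric|^2=\partial_tR+O(C(\lambda\mid n,A))$ pointwise.
	\item I would avoid bounding $\partial_tR$ directly. Instead, integrate over a time interval $[t-h,t]$ at a fixed point $y$. The left side of the identity integrates to at most $2\lambda+hC(\lambda\mid n,A)$.
	\item $|\Ric|^2$ is Lipschitz in time with constant $C(n,A)$. Indeed, $\partial_t\Ric$ is a combination of $\nabla^2\Ric$, $\Rm*\Ric$, $\Rm*E$ and $\nabla^2E$, all bounded by $A$. Therefore
	\begin{equation*}
		|\Ric|^2(y,t)\leq C\lambda/h+C(\lambda\mid n,A)+C(n,A)h .
	\end{equation*}
	\item Choose $h=\min\{\sqrt\lambda,\alpha^2\}$, integrating forward in time when $t$ is close to $-\alpha^2$. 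This gives $|\Ric|\leq C(\lambda\mid n,A)$ on $P^-(x_0,0;\alpha)$.
\end{itemize}
For \cref{ineq:curvature-Lipschitz-in-time-variable}, note that $\big|\partial_t|\Rm|\big|\leq|\partial_t\Rm|$ (in the barrier sense). Also $\partial_t\Rm$ is built from $\nabla^2\Ric$, $\Rm*\Ric$, $\nabla^2E$ and $\Rm*E$. Now $\nabla^2\Ric$ is small by \cref{lem:interpolation}, since $|\Ric|\leq C(\lambda\mid n,A)$ and $|\nabla^4\Ric|\leq A$. The term $\nabla^2E$ is small by Step 2's first bullet. The remaining two terms are bounded by $AC(\lambda\mid n,A)$ and $A\lambda$. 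Shrinking $\alpha$ once more if needed handles the ball radii.

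\textbf{Step 3: proof of (\ref{lem-it:existence-of-cut-off}), and the main obstacle.} The main difficulty is obtaining a spatial cut-off whose Hessian bound $L$ is independent of $\lambda$. A time-dependent distance function is neither smooth nor has controlled time derivative. I will instead take the time-independent function $\rho:=d_{g(0)}(x_0,\cdot)$, handled as follows.
\begin{itemize}
	\item By the curvature bound $|\Rm|\leq A$ and $|B(x_0,0,1)|\geq v$, the Cheeger--Gromov--Taylor estimate gives $\mathrm{inj}_{g(0)}(x_0)\geq i_0(n,A,v)>0$. On $B(x_0,0,i_0/2)$, Hessian comparison gives $|\nabla^2_{g(0)}\rho^2|\leq C(n,A)$.
	\item Let $\delta:=\min\{i_0/2,\alpha/4\}$ and fix a time cut-off $\psi(t)$ equal to $1$ near $0$ and vanishing for $t\leq-\delta^2$. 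Define
	\begin{equation*}
		\chi:=1-\eta(\rho^2/\delta^2)\,\psi(t),
	\end{equation*}
	with $\eta$ a standard cut-off equal to $1$ on $[0,1/4]$ and to $0$ on $[1,\infty)$. Then $\chi(x_0,0)=0$.
	\item By Step 1, $\{\chi<1\}\subset P^-(x_0,0;\alpha)$ once $\delta$ is small relative to the metric distortion.
	\item The $g(t)$-norms of $\nabla\chi$ and $\nabla^2\chi$ are controlled by their $g(0)$-norms. This uses the metric equivalence and that $\partial_t\Gamma$ is a combination of $\nabla\Ric$ and $\nabla E$, so $|\Gamma_t-\Gamma_0|\leq C(n,A)$.
	\item Finally $|\partial_t\chi|\leq C\delta^{-2}$. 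All constants depend only on $n$, $A$ and $v$, which gives $L(n,A,v)$.
\end{itemize}
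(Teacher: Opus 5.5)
Your proposal is correct and follows essentially the same route as the paper. Both arguments make $\nabla^2R$ and $\nabla^2E$ small by interpolation, integrate the evolution of $R$ at a fixed point over a window of length $\sqrt\lambda$, and combine this with the $C(n,A)$ time-Lipschitz bound on $|\Ric|^2$. A second interpolation then controls $\nabla^2\Ric$ and hence the time derivative of $|\Rm|$, and the cut-off is built from $d_{g(0)}(x_0,\cdot)^2$ inside the Cheeger--Gromov--Taylor injectivity radius, multiplied by a time cut-off.

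The differences are cosmetic. You take $h=\min\{\sqrt\lambda,\alpha^2\}$ where the paper treats $\lambda>\alpha^4$ separately. Your forward-integration and metric-distortion remarks are unnecessary: $P^-(x_0,t_0;\alpha r)$ uses the time-$t_0$ ball, and the hypotheses hold down to time $t_0-r^2$.
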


\begin{proof}
	By time-shifting and parabolic rescaling, we may assume $t_0=0$ and $r=1$.
	Set $h:=\partial_tg$.
	Clearly, $\sum_{j=0}^4|\nabla^jh|\leq C(n,A)$ over $P^{-}(x_0,0;1)$.
	Then the interpolation inequality \cref{ineq:interpolation} gives
	\begin{equation*}
		|\nabla^2R|+|\nabla^2E|\leq C\lambda^{1/2}\qquad\text{over }P^{-}(x_0,0;4\alpha)
	\end{equation*}
	for some $\alpha>0$.
	Integrating $\square R=2|\Ric|^2-\langle\Ric,E\rangle+\delta^2E-\Delta\Tr E$, we have
	\begin{align}
		2\int_{t-\sigma}^{t}  |\Ric|^2(y,s)  \,ds
		&=R(y,t)-R(y,t-\sigma)
		-\int_{t-\sigma}^t  \big(\Delta R  -\langle \Ric,E  \rangle  +\delta^2E  -\Delta\Tr E  \big)(y,s)  \,ds  \notag  \\
		&\leq 2\lambda+C\lambda^{1/2}\sigma  \label{ineq:integral-of-Ricci-curvature}
	\end{align}
	for $(y,t)\in P^{-}(x_0,0;2\alpha)$ and $\sigma\in[0,\alpha^2]$.
	The evolution equation of curvature \cite[\S 3.3]{lotay2017laplacian} gives
	\begin{equation}\label{ineq:derivative-of-Ricci-curvature}
		|\partial_t\Ric|\leq C(n)\big(|\nabla^2h|+|\Rm|\,|h|\big)\leq C(n,A),\qquad
		\left|\frac{d}{dt}|\Ric|^2(y,t)\right|\leq C(n,A)
	\end{equation}
	over $P^{-}(x_0,0;1)$.
	If $\lambda>\alpha^4$, then $|\Ric|\leq C\lambda^{1/4}$ on $P^{-}(x_0,0;2\alpha)$ automatically by the uniform curvature bound.
	So we may assume $\lambda\in(0,\alpha^4]$.
	(In the case $\lambda=0$, $g(t)$ is clearly Ricci flat and independent of $t$ over $P^{-}(x_0,0;1)$.)
	For $\sigma:=\lambda^{1/2}$, \cref{ineq:integral-of-Ricci-curvature} shows
	\begin{equation*}
		\frac{1}{\sigma}\int_{t-\sigma}^t|\Ric|^2(y,s)\,ds\leq C\lambda^{1/2}.
	\end{equation*}
	So there is $s_*\in[t-\sigma,t]$ with
	$|\Ric|^2(y,s_*)\leq C\lambda^{1/2}$ and \cref{ineq:derivative-of-Ricci-curvature} implies
	\begin{equation*}
		|\Ric|^2(y,t)\leq|\Ric|^2(y,s_*)+C\sigma\leq C\lambda^{1/2}.
	\end{equation*}
	This shows $|\Ric|\leq C\lambda^{1/4}$ over $P^{-}(x_0,0;2\alpha)$.
	Combining it with \cref{ineq:interpolation} gives $|\nabla^2\Ric|\leq C\lambda^{1/8}$ over $P^{-}(x_0,0,\alpha)$.
	So far we have shown
	\begin{equation*}
		|h|\leq C\lambda^{1/4},\qquad
		|\nabla^2h|\leq C\lambda^{1/8}\quad\text{over }P^{-}(x_0,0,\alpha).
	\end{equation*}
	By the curvature evolution equation \cite[\S 3.3]{lotay2017laplacian}, for each fixed $y\in B(x_0,0,\alpha)$, we have
	\begin{equation*}
		\left|\frac{d}{dt}|\Rm|_{g(t)}\right|
		\leq C\big(|\nabla^2h|+|\Rm|\,|h|\big)
		\leq C(\lambda^{1/8}+\lambda^{1/4}),\qquad\text{a.e. }t\in[-\alpha^2,0].
	\end{equation*}
	Integrating this gives \cref{ineq:curvature-Lipschitz-in-time-variable}.

	Next we construct the cut-off function.
	Choose smooth functions $\zeta:[0,\infty)\to[0,1]$ and $\eta:[-1,0]\to[0,1]$ satisfying
	\begin{equation*}
		\zeta=1\text{ on }[0,1],\qquad
		\zeta=0\text{ on }[4,\infty),\qquad
		|\zeta'|+|\zeta''|\leq C;
	\end{equation*}
	\begin{equation*}
		\eta=0\text{ on }[-1,-\tfrac{1}{2}\alpha^2],\qquad
		\eta=1\text{ on }[-\tfrac{1}{4}\alpha^2,0],\qquad
		|\eta'|\leq C\alpha^{-2}.
	\end{equation*}
	The injectivity radius estimate of Cheeger--Gromov--Taylor implies $\mathrm{inj}_{g(0)}(x_0)>c(n,A,v)>0$.
	Define $\rho:=\frac{1}{2}\min\{c(n,A,v),\alpha\}$ and
	\begin{equation*}
		\chi_0(y):=\zeta\bigg(\frac{d_{g(0)}(x_0,y)^2}{\rho^2}\bigg),\qquad
		\chi(y,t):=1-\eta(t)\chi_0(y).
	\end{equation*}
	Then $|d\chi_0|_{g(0)}\leq C\rho^{-1}$ and $|\nabla^{2,g(0)}\chi_0|_{g(0)}\leq C\rho^{-2}$.
	Moreover, the assumed bounds on $\partial_tg$ and its derivative give
	\begin{equation*}
		|\nabla\chi_0|\leq C\rho^{-1},\qquad
		|\nabla^2\chi_0|\leq C(\rho^{-2}+\alpha^2\rho^{-1}).
	\end{equation*}
	for $t\in[-\alpha^2,0]$.
	It follows that
	\begin{equation*}
		|\partial_t\chi|\leq C\alpha^{-2},\qquad
		|\nabla\chi|\leq C\rho^{-1},\qquad
		|\nabla^2\chi|\leq C(\rho^{-2}+\alpha^2\rho^{-1}).
	\end{equation*}
	Clearly, we also have $\chi(x_0,0)=0$ and $\{\chi<1\}\subset P^{-}(x_0,0;\alpha)$.
\end{proof}

\section{Proof of \texorpdfstring{\cref{thm:backward-pseudolocality}}{backward pseudolocality}}\label{sec:proof-of-main-theorem}
This section combines our preparations in \S\ref{sec:estimate-on-Nash-entropy} to give the final proof of \cref{thm:backward-pseudolocality}.
Here we restrict ourselves to $G_2$-Laplacian flow.
We shall only use the following properties of this flow.
\begin{itemize}
	\item Shi-type estimates \cite[Theorem 4.4]{lotay2017laplacian}.
	\item Compactness theorem \cite[Theorem 7.2]{lotay2017laplacian}.
	\item $|E(g)|\leq C|T_{\varphi}|^2$ for some universal constant $C$ and $R_g=-|T_{\varphi}|^2$.
	So a bound on torsion $T_{\varphi}$ controls both the deviation $E$ and the scalar curvature $R$.
\end{itemize}

\begin{Lemma}\label{lem:extension}
	For every $\alpha>0$, there exists a constant $\lambda(\alpha)>0$ such that the following holds.
	
	Let $(M^7,\varphi(t),g(t))_{t\in I}$ be a Laplacian flow on a compact manifold, $(x_0,t_0)\in M\times I$ be a point, and $r>0$ be a scale such that $[t_0-r^2,t_0]\subset I$.
	Suppose
	\begin{equation*}
		\sup_{M\times[t_0-r^2,t_0]}|T_{\varphi}|^2\leq\lambda r^{-2},\qquad
		|B(x_0,t_0,r)|\geq\alpha r^7,\qquad
		|\Rm|\leq\alpha^{-1}r^{-2}\text{ over }P(x_0,t_0;r,-(\alpha r)^2).
	\end{equation*}
	Then over $P(x_0,t_0;(1-\alpha)r,-(1-\alpha)r^2)$, we have
	\begin{equation*}
		|\Ric|\leq\alpha r^{-2},\qquad
		|\Rm|\leq\alpha r^{-2}+\sup_{B(x_0,t_0,r)}|\Rm|.
	\end{equation*}
\end{Lemma}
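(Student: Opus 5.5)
We normalize $t_0=0$, $r=1$ and run a continuity (bootstrap) argument in the backward time direction. Let $t_*\in[-(1-\alpha),0]$ be the smallest time such that over $P(x_0,0;1-\tfrac{\alpha}{2},t_*)$ (with $t_*$ in the time slot) we have the \emph{weak} bounds $|\Rm|\leq 2\alpha^{-1}$ and $|\Ric|\leq\alpha$. By the hypothesis on $P(x_0,0;1,-\alpha^2)$, together with the Shi-type estimates \cite[Theorem 4.4]{lotay2017laplacian} and \cref{lem:almost-static}(\ref{lem-it:almost-static-curvature-bound}) applied at scale comparable to $\alpha$ (with $|E|+|R|\leq C\lambda$), the set of admissible times contains $[-c\alpha^2,0]$ for some $c(\alpha)>0$, provided $\lambda$ is small. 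The goal is to show that on $[t_*,0]$ the bounds actually hold in the improved form $|\Ric|\leq\alpha/2$ and $|\Rm|\leq\tfrac{\alpha}{2}+\sup_{B(x_0,0,1)}|\Rm|$. Together with openness, which follows again from Shi-type estimates, this forces $t_*=-(1-\alpha)$.

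The first ingredient is the comparability of metrics. On the admissible region $|\partial_tg|\leq|E|+2|\Ric|\leq C\lambda+2\alpha$, so balls $B(x,t,\rho)$ and $B(x,0,\rho)$ are comparable up to factors $e^{\pm C\alpha}$. Volume noncollapsing at time $0$ therefore persists, and the balls $B(x_0,0,1-\alpha)$ stay well inside the controlled region.

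The second ingredient propagates the entropy backward. For $x\in B(x_0,0,1-\alpha)$ and a small fixed scale $\tau_0=\tau_0(\alpha)$, the curvature bound and noncollapsing at time $0$ give, via the forward $\epsilon$-regularity/entropy rigidity collected in Appendix \ref{sec:preliminaries}, that $\mathcal N_{x,0}(\tau_0)\geq-\delta$ with $\delta\to0$ as $\tau_0\to0$. Along the worldline $t\mapsto(x,t)$ we integrate \cref{prop:derivative-of-Nash-entropy-in-time-variable} backward. To bound $\mathcal I(x,t;\tau_0)$ we use \cref{lem:small-Ricci-curvature-integral} at scale $\sqrt{\tau_0}$, with the cut-off $\chi$ supplied by \cref{lem:almost-static}(\ref{lem-it:existence-of-cut-off}). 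The hypotheses of that lemma (bounds on four derivatives of $\Rm$ and $E$) hold on the admissible region by Shi-type estimates. On $\{\chi<1\}$, \cref{lem:almost-static}(\ref{lem-it:almost-static-curvature-bound}) gives $|\Ric|\leq C(\lambda\mid\alpha)$, so $\kappa$ and $\lambda$ are small. Since $C(\lambda,\kappa\mid n,Y,L)\to0$, we obtain $\mathcal I\leq\eta(\lambda)\tau_0^{-1}$ with $\eta(\lambda)\to0$ as $\lambda\to0$. Integrating over a time interval of length $\leq1$ then loses only $C\eta^{1/2}\tau_0^{-1}+C(\lambda\tau_0)\tau_0^{-1}$. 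For $\lambda\ll\tau_0$ this keeps $\mathcal N_{x,t}(\tau_0)\geq-2\delta$ for all $t\in[t_*,0]$. The remark after \cref{prop:derivative-of-Nash-entropy-in-time-variable} notes exactly that a one-sided bound suffices for this.

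The third ingredient closes the bootstrap. The $\epsilon$-regularity for pointed Nash entropy from the appendix turns $\mathcal N_{x,t}(\tau_0)\geq-2\delta$ into a curvature bound $|\Rm|\leq C\tau_0^{-1}$ near $(x,t)$, with derivative bounds by Shi. This bound is independent of the bootstrap constant, so \cref{lem:almost-static}(\ref{lem-it:almost-static-curvature-bound}) applies at scale $\sqrt{\tau_0}$ everywhere on $P(x_0,0;1-\alpha,t_*)$. It yields $|\Ric|\leq C(\lambda\mid\alpha)\leq\alpha/2$, and the Lipschitz estimate \cref{ineq:curvature-Lipschitz-in-time-variable} gives $\bigl||\Rm|(y,t)-|\Rm|(y,0)\bigr|\leq C(\lambda\mid\alpha)\leq\alpha/2$, which is the improved bound.

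I expect the main obstacle to be this second step: getting genuine \emph{smallness} of $\mathcal I$ uniformly along the worldline. This requires checking that \cref{lem:small-Ricci-curvature-integral} is applicable at every base point $(x,t)$ with uniform $Y$ and $L$ (uniform noncollapsing via the metric comparability), and tracking that the constants are of the form $C(\lambda\mid\cdots)$. The scales must be ordered as $\lambda\ll\tau_0\ll\alpha$, and the bootstrap constant must be chosen independent of $\tau_0$.
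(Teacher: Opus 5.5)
Your ingredients are the same as the paper's: entropy at time $0$ from \cref{thm:entropy-continuity}; backward propagation via \cref{prop:derivative-of-Nash-entropy-in-time-variable}, with $\mathcal I$ controlled by \cref{lem:small-Ricci-curvature-integral} and the cut-off of \cref{lem:almost-static}(\ref{lem-it:existence-of-cut-off}); then $\epsilon$-regularity and \cref{lem:almost-static}(\ref{lem-it:almost-static-curvature-bound}) for the conclusion. However, your continuity argument is run on the wrong quantity and, as written, it does not close.

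\textbf{Temporal problem.} You bootstrap curvature bounds on the fixed region $B(x_0,0,1-\tfrac{\alpha}{2})\times[t_*,0]$. You then assert that Shi-type estimates there supply the hypotheses of \cref{lem:almost-static} and \cref{lem:small-Ricci-curvature-integral} at every base point $(x,s)$ with $s\in[t_*,0]$. Those lemmas need four-derivative bounds on a backward neighbourhood $P^-(x,s;c\sqrt{\tau_0})$ of fixed relative size. For $s\in[t_*,t_*+c\tau_0)$ this neighbourhood reaches below $t_*$, where your bootstrap says nothing. Moreover, local Shi estimates on the admissible region degenerate as $s\downarrow t_*$. So $\partial_s\mathcal N_{x,s}(\tau_0)$ is not controlled near $t_*$, and you do not get the entropy bound, hence not the improved curvature, at $t=t_*$. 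That is exactly what you need to push past $t_*$.

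\textbf{Spatial problem.} For $s<-\alpha^2$ no curvature information is available outside $B(x_0,0,1-\tfrac{\alpha}{2})$, so the Shi argument also fails near the edge of that ball. Consistently with this, you only derive the improvement on $B(x_0,0,1-\alpha)$, which is strictly smaller than the set you bootstrap on.

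\textbf{The missing idea} is to make the entropy lower bound itself the continuity quantity, as the paper does. Take $A=\overline{B(x_0,0,1-\tfrac{\alpha}{2})}$ and
$t_*:=\inf\{t:\mathcal N_{y,s}(\rho^2)\geq-\eta\text{ for all }(y,s)\in A\times[t,0]\}$.
\begin{itemize}
\item Each point of this set carries its own bound $|\Rm|\leq\epsilon\rho^{-2}$ on $P(y,s;\epsilon^{-1}\rho,-(1-\epsilon)\rho^2)$ by \cref{thm:refined-epsilon-regularity}. This bound does not depend on any bootstrap constant, and it reaches below $s$ and beyond the edge of $A$.
\item Hence Shi-type estimates, \cref{thm:no-local-collapsing}, \cref{lem:almost-static}(\ref{lem-it:existence-of-cut-off}) and \cref{lem:small-Ricci-curvature-integral} apply uniformly at every $(y,s)\in A\times[t_*,0]$, including $s=t_*$ and $y\in\partial A$.
\item This gives $\partial_s\mathcal N_{y,s}(\rho^2)\leq C(\lambda\mid\alpha)$, hence $\mathcal N_{y,t_*}(\rho^2)\geq-\eta/2$. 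Continuity and compactness of $A$ then force $t_*=-(1-\alpha)$.
\item The curvature conclusions are extracted only afterwards, exactly as in your third step.
\end{itemize}
With this change your weak bounds $|\Rm|\leq2\alpha^{-1}$, $|\Ric|\leq\alpha$ and the metric-comparability step become unnecessary: noncollapsing at base points comes from \cref{thm:no-local-collapsing} and the entropy bound.

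Within your own scheme you could instead propagate the entropy only for $s\geq t_*+c\tau_0$ and use the backward reach $(1-\epsilon)\tau_0$ of $\epsilon$-regularity to regain control slightly below $t_*$. Handling the spatial edge that way, however, again amounts to bootstrapping the entropy.
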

\begin{proof}
	By time-shifting and parabolic rescaling, we may assume $t_0=0$ and $r=1$.
	Set
	\begin{equation*}
		A:=\overline{B(x_0,0,1-\tfrac{\alpha}{2})}.
	\end{equation*}
	Let $\epsilon\in(0,\frac{1}{4})$ and $\eta(\epsilon)>0$ be the constant determined in \cref{thm:refined-epsilon-regularity}.
	The assumed volume ratio lower bound and \cref{thm:no-local-inflating} give Nash entropy lower bound.
	Combining it with the curvature bound allows us to apply \cref{thm:entropy-continuity} to choose $\rho>0$ such that
	\begin{equation*}
		\mathcal N_{y,0}(\rho^2)\geq-\eta/4
	\end{equation*}
	for all $y\in A$.
	We claim $\mathcal N_{y,s}(\rho^2)\geq-\eta$ for all $(y,s)\in A\times[-(1-\alpha),0]$.
	To see this, we define
	\begin{equation*}
		t_*:=\inf\left\{t\in[-(1-\alpha),0]:\mathcal N_{y,s}(\rho^2)\geq-\eta\text{ for all }(y,s)\in A\times[t,0]\right\}.
	\end{equation*}
	Then it suffices to show $t_*=-(1-\alpha)$.
	Let $(y,s)\in A\times[t_*,0]$.
	The epsilon-regularity \cref{thm:refined-epsilon-regularity} together with Shi-type estimates supplies the curvature derivative bounds required in \cref{lem:almost-static}, which yields the cut-off function needed in \cref{lem:small-Ricci-curvature-integral}.
	The latter lemma gives smallness of $\mathcal I(y,s;\rho^2)$ and \cref{prop:derivative-of-Nash-entropy-in-time-variable} then yields $\partial_s\mathcal N_{y,s}(\rho^2)\leq C(\lambda\mid\alpha)$.
	By taking $\lambda$ sufficiently small, we have
	\begin{equation*}
		\mathcal N_{y,t_*}(\rho^2)\geq\mathcal N_{y,0}(\rho^2)-\eta/4\geq-\eta/2.
	\end{equation*}
	Were $t_*>-(1-\alpha)$, then the compactness of $A$ and continuity of pointed Nash entropy would make $\mathcal N_{y,s}\geq-\eta$ on a larger interval, contradicting the choice of $t_*$.
	This proves the claim.

	The above claim and \cref{thm:refined-epsilon-regularity} together give curvature bound on a parabolic neighborhood for each point of $P(x_0,0;1-\alpha,-(1-\alpha))$.
	Using the Shi-type estimates and \cref{thm:no-local-collapsing}, we are allowed to apply \cref{lem:almost-static}(\ref{lem-it:almost-static-curvature-bound}) to finish the proof.
\end{proof}

With \cref{lem:extension} in hand, \cref{thm:backward-pseudolocality} follows from an induction argument over scales as in \cite[Lemma 26.4]{bamler2020structure}.
We copy the proof here for completeness.
\begin{Lemma}\label{lem:induction-over-scales}
	For every $\alpha>0$, there exists a constant $\lambda(\alpha)>0$ such that the following holds.

	Let $(M^7,\varphi(t),g(t))_{t\in I}$ be a Laplacian flow on a compact manifold, $(x_0,t_0)\in M\times I$ be a point, and $r>0$ be a scale such that $[t_0-r^2,t_0]\subset I$.
	Suppose
	\begin{enumerate}
		\item $|B(x_0',t_0,r')|\geq\alpha{r'}^7$ for any ball $B(x_0',t_0,r')\subset B(x_0,t_0,r)$;
		\item $|\Rm|\leq\alpha^{-1}r^{-2}$ on $B(x_0,t_0,r)$;
		\item $|T_{\varphi}|^2\leq\lambda r^{-2}$ over $M\times[t_0-r^2,t_0]$.
	\end{enumerate}
	Then $|\Rm|\leq 2\alpha^{-1}r^{-2}$ over $P(x_0,t_0;\frac{1}{2}r,-\frac{1}{2}r^2)$.
\end{Lemma}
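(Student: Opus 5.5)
We argue by contradiction, following the induction-over-scales scheme of \cite[Lemma 26.4]{bamler2020structure}. Suppose the conclusion fails for a sequence of counterexamples with $\lambda_i\to 0$. For each of them consider the set of points $(x,t)\in B(x_0,t_0,r)\times[t_0-r^2,t_0]$ at which $|\Rm|(x,t)>2\alpha^{-1}r^{-2}$ while $(x,t)$ lies, in a suitably weighted sense, inside $P(x_0,t_0;\frac12 r,-\frac12 r^2)$. Since the flow is smooth on a compact manifold, curvature is bounded, so we can run a point-picking argument. Concretely, we look at the first time $t_1\le t_0$ (going backward from $t_0$) and the radius at which the bound $|\Rm|\le 2\alpha^{-1}r^{-2}$ breaks down. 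Equivalently, we choose a maximal subscale $r_1\le r$ and a center $x_1$ with $B(x_1,t_0,r_1)\subset B(x_0,t_0,r)$ such that the conclusion fails at scale $r_1$ but holds at all sufficiently smaller subscales. Hypotheses (1) and (3) are scale-invariant in the right direction: (1) passes to subballs, and $|T_\varphi|^2\le\lambda r^{-2}\le\lambda r_1^{-2}$. Hypothesis (2) becomes $|\Rm|\le\alpha^{-1}r^{-2}\le\alpha^{-1}r_1^{-2}$ on the subball.

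The key step is to show that the bound $|\Rm|\le\alpha^{-1}r^{-2}$ on the time slice $t_0$ extends to a short backward interval $[t_0-(\alpha' r)^2,t_0]$ on a slightly smaller ball. Once that holds, \cref{lem:extension} applies (with $\alpha$ replaced by a smaller constant $\alpha'(\alpha)$ and $\lambda\le\lambda(\alpha')$). It yields $|\Rm|\le\alpha' r^{-2}+\sup_{B(x_0,t_0,r)}|\Rm|\le(\alpha'+\alpha^{-1})r^{-2}<2\alpha^{-1}r^{-2}$ on $P(x_0,t_0;(1-\alpha')r,-(1-\alpha')r^2)$, which contains $P(x_0,t_0;\frac12r,-\frac12r^2)$ once $\alpha'<\frac12$.

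To obtain the short-time extension we use the induction. Cover $B(x_0,t_0,\frac34 r)$ by balls of a small radius $\theta r$, where $\theta=\theta(\alpha)$ is fixed. By minimality of the failing scale (the induction hypothesis), the statement already holds at scale $\theta r$ around each such center. Applying it there gives $|\Rm|\le2\alpha^{-1}(\theta r)^{-2}$ on $P(x',t_0;\frac12\theta r,-\frac12\theta^2r^2)$. This provides exactly the hypothesis $|\Rm|\le\alpha'^{-1}r^{-2}$ over $P(x_0,t_0;\frac34r,-(\alpha'r)^2)$ needed by \cref{lem:extension} at scale $\frac34 r$, with $\alpha'\sim\theta^2\alpha$. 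The volume lower bound at scale $\frac34r$ comes from (1). Then \cref{lem:extension}, applied with $\lambda\le\lambda(\alpha')$, improves the curvature bound from the crude $\sim\theta^{-2}$ to $\alpha'+\alpha^{-1}$ times $r^{-2}$ on the large backward neighborhood. This contradicts the choice of the failing scale.

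The main obstacle is organizing the induction so that it terminates. We need a smallest scale at which the lemma trivially holds. For this we use that on a fixed smooth compact flow there is $r_*>0$ such that, at all scales $\le r_*$, smoothness gives $|\Rm|\le2\alpha^{-1}s^{-2}$ on $P(\cdot;\frac12 s,-\frac12s^2)$ (curvature is bounded while $s^{-2}\to\infty$). We then induct upward in scales by factors $\theta^{-1}$. Each step uses only \cref{lem:extension} with constants depending on $\alpha$ and $\theta(\alpha)$, so $\lambda$ stays independent of the number of steps. The delicate point I expect is checking that the induction hypothesis at scale $\theta s$ really yields the backward curvature bound on a full ball of radius comparable to $s$, rather than only on the small balls, and that the constant $\alpha'$ does not degrade from step to step. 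This holds because the output bound $(\alpha'+\alpha^{-1})r^{-2}$ is again below $2\alpha^{-1}r^{-2}$, so each step returns the same form of bound it consumed.
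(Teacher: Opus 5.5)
Your argument is correct and is essentially the paper's proof. You pass to a minimal counterexample so that the lemma holds on all sub-balls of radius at most a fixed fraction of $r$; the paper uses $\frac12 r$ and leaves termination implicit, while you use $\theta r$ and make termination explicit via the trivial small-scale base case. You then turn those sub-ball conclusions into a crude bound $|\Rm|\le C(\alpha)r^{-2}$ on $P(x_0,t_0;\frac34 r,-c\,r^2)$. Finally you apply \cref{lem:extension} at scale $\frac34 r$ with a smaller constant $\alpha'$ to get $|\Rm|\le\alpha'(\frac34 r)^{-2}+\alpha^{-1}r^{-2}\le 2\alpha^{-1}r^{-2}$ on $P(x_0,t_0;\frac12 r,-\frac12 r^2)$.

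Only minor points need fixing. The application must be at scale $\frac34 r$ as in your third paragraph, not at scale $r$ as in your second, because the backward bound is not available on all of $B(x_0,t_0,r)$. That in turn requires $\alpha'\le\frac19$, so that $P(x_0,t_0;\frac34(1-\alpha')r,-\frac{9}{16}(1-\alpha')r^2)$ contains $P(x_0,t_0;\frac12 r,-\frac12 r^2)$. The opening sequence of counterexamples with $\lambda_i\to0$ plays no role and can be dropped.
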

\begin{proof}
	Fix $\alpha\in(0,\frac{1}{2})$ and choose $\lambda(\frac{1}{8}(\frac{4}{3})^2\alpha)$ as in \cref{lem:extension}.
	Assume that the lemma was false.
	After possibly replacing $x_0,r$ with some $x_0',r'$ such that $B(x_0',t_0,r')\subset B(x_0,t_0,r)$, we may assume that the lemma holds for any such point and scale if $r'\leq\frac{1}{2}r$.
	Since the assumptions of the lemma continue to hold after such a replacement, we obtain that for any $B(x_0',t_0,r')\subset B(x_0,t_0,r)$ with $r'\leq\frac{1}{2}r$ we have $|\Rm|\leq 2\alpha^{-1}{r'}^{-2}$ on $P(x_0',t_0;\frac{1}{2}r',-\frac{1}{2}{r'}^2)$.
	This implies that
	\begin{equation*}
		|\Rm|\leq 8\alpha^{-1}r^{-2}=(\tfrac{1}{8}(\tfrac{4}{3})^2\alpha)^{-1}(\tfrac{3}{4}r)^{-2}
		\qquad\text{on }P(x_0,t_0;\tfrac{3}{4}r;-\tfrac{1}{8}r^2).
	\end{equation*}
	Applying \cref{lem:extension} to the ball $B(x_0,t_0,\frac{3}{4}r)$ implies that
	\begin{equation*}
		|\Rm|\leq\tfrac{1}{8}(\tfrac{4}{3})^2\alpha(\tfrac{3}{4}r)^{-2}+\alpha^{-1}r^{-2}
		\leq 2\alpha^{-1}r^{-2}\qquad\text{on }P(x_0,t_0;\tfrac{1}{2}r,-\tfrac{1}{2}r^2),
	\end{equation*}
	in contradiction to our assumption.
\end{proof}

\begin{proof}[Proof of \cref{thm:backward-pseudolocality}]
	The theorem follows by applying \cref{lem:induction-over-scales} to sufficiently small subballs of $B(x_0,t_0,r)$.
\end{proof}

\appendix
\section{Preliminaries}\label{sec:preliminaries}
Here we collect some results established in \cite{he2026pseudolocality} that will be used in the current paper.
Roughly speaking, \cite{he2026pseudolocality} proves pseudolocality for $G_2$-Laplacian flow with bounded torsion and generalizes all the results in \cite{bamler2020entropy} to that case.
Below we write $|E|_{\mathrm{max}}$ for $\sup_{M\times I}|E|$.
Whenever it appears, we are implicitly assuming it is finite, i.e., $\sup_{M\times I}|E|<\infty$.
Similar conventions are made for $R_{\mathrm{min}}$.

By default, $M^n$ is a compact manifold and $(g(t))_{t\in I}$ is a smooth family of Riemannian metrics on it.
\begin{Proposition}
	Let $d\nu=(4\pi\tau)^{-n/2}e^{-f}dV_g$ be the conjugate heat kernel measure based at $(x_0,t_0)\in M\times I$, where $\tau(t)=t_0-t$. Then
	\begin{equation}\label{ineq:energy-estimate}
		\int_M   |\nabla f|^2   d\nu
		\leq\frac{n}{2\tau}-R_{\mathrm{min}}
		+\frac{C(|E|_{\mathrm{max}}\tau\mid n)}{\tau}.
	\end{equation}
\end{Proposition}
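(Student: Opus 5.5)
The plan is to compute $\frac{d}{d\tau}$ of the Nash entropy, or more directly to bound $\int|\nabla f|^2\,d\nu$ via Perelman's $\mathcal W$-type monotonicity adapted to the deviation $E$, in the spirit of \cite[Prop.~5.2]{bamler2020entropy}. Set $w:=(4\pi\tau)^{-n/2}e^{-f}=K(x_0,t_0;\cdot,t)$, so that $\square^*w=0$. Here $\square^*=-\partial_t-\Delta+R-\tfrac12\Tr E$, since $\partial_t dV=(-R+\tfrac12\Tr E)dV$.

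\textbf{Step 1: an integration-by-parts identity.} We have $\int|\nabla f|^2\,d\nu=\int \Delta f\,d\nu$. I will therefore work with the quantity
\begin{equation*}
v:=\big(\tau(2\Delta f-|\nabla f|^2+R)+f-n\big)w,
\end{equation*}
which is Perelman's Harnack quantity. Along Ricci flow it satisfies $\square^*v=-2\tau|\Ric+\nabla^2f-\tfrac{g}{2\tau}|^2w\le0$. For a general family, the same computation (cf.\ \cite[(3.7)]{chen2018shi}, already used above through $\Theta$) gives
\begin{equation*}
\square^*v=-2\tau|\Theta|^2w+\tfrac{\tau}{8}|E|^2w+(\text{terms linear in }E\text{ and }\nabla E).
\end{equation*}
Rather than pointwise, I will use the integrated version:
\begin{equation*}
\frac{d}{d\tau}\Big(\tau\!\int(|\nabla f|^2+R)\,d\nu\Big)
\end{equation*}
can be written in terms of $\int(2\tau|\Theta|^2-\tfrac{\tau}{8}|E|^2)\,d\nu$ plus lower order terms. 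The key identity I aim for is
\begin{equation*}
\frac{d}{d\tau}\Big[\tau\!\int(|\nabla f|^2+R)\,d\nu-\tfrac n2\Big]\ \ge\ \text{(good)}-C|E|_{\max}\Big(1+\tau\!\int|\nabla f|^2 d\nu\Big),
\end{equation*}
or equivalently that $\tau^2\big(\int(|\nabla f|^2+R)\,d\nu-\tfrac{n}{2\tau}\big)$ is almost nonincreasing in $\tau$.

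\textbf{Step 2: a cleaner route.} A simpler approach is to compute directly
\begin{equation*}
\frac{d}{dt}\int(|\nabla f|^2+R)\,d\nu .
\end{equation*}
For Ricci flow this equals $-2\int|\Ric+\nabla^2f|^2\,d\nu\le -\tfrac{2}{n}\big(\int(R+\Delta f)\,d\nu\big)^2$, using Cauchy--Schwarz and trace. Writing $\Phi(\tau):=\int(|\nabla f|^2+R)\,d\nu$, this gives the Riccati inequality $\Phi'(\tau)\ge\tfrac2n\Phi^2$ in $\tau$. Comparing with $\tfrac{n}{2\tau}$, since $\Phi$ cannot blow up at positive $\tau$ backwards, forces $\Phi\le\tfrac{n}{2\tau}$. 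In the general case, the extra terms in $\frac{d}{dt}\Phi$ are
\begin{equation*}
\int\big(\langle E,\nabla^2 f\rangle,\ E(\nabla f,\nabla f),\ \Tr E\,|\nabla f|^2,\ \nabla\cdot\text{-terms of }E\big)\,d\nu .
\end{equation*}
The divergence terms of $E$ are integrated by parts onto $\nabla f$ and $\nabla^2 f$, so no derivatives of $E$ remain. Each term is then bounded by $|E|_{\max}\big(\int|\nabla^2 f+\Ric|^2 d\nu\big)^{1/2}+|E|_{\max}\Phi+|E|_{\max}|R_{\min}|$, with the $\Ric$ pieces absorbed. Absorbing half into the good term $-2\int|\Ric+\nabla^2 f|^2$ yields
\begin{equation*}
\Phi'(\tau)\ \ge\ \tfrac1n\Phi^2-C|E|_{\max}\Phi-C|E|_{\max}^2-\dots
\end{equation*}

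\textbf{Step 3: ODE comparison.} I will use $\Phi\ge R_{\min}$ and $\Phi(\tau)\to$ finite as $\tau\to\tau_{\max}$ (the heat kernel is smooth), together with $\Phi\sim\tfrac{n}{2\tau}$ as $\tau\to0$. The Riccati comparison then gives
\begin{equation*}
\Phi(\tau)\le\frac{n}{2\tau}\big(1+C(|E|_{\max}\tau\mid n)\big)+C(|E|_{\max}\tau\mid n)/\tau,
\end{equation*}
from which the claim follows after subtracting $\int R\,d\nu\ge R_{\min}$. I expect Step 2 to be the main obstacle: precisely the bookkeeping of the $E$-terms, to verify that no derivative of $E$ survives after integrating by parts. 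The Riccati absorption also has to work with constants of the form $C(|E|_{\max}\tau\mid n)$ that tend to $0$. If the Riccati coefficient degrades to $\tfrac1n$, one instead compares with $\tfrac{n}{\tau}$. To retain the sharp $\tfrac{n}{2\tau}$, the absorption will use a factor $(1-\delta)$ with $\delta=\delta(|E|_{\max}\tau)\to0$.
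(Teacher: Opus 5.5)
The paper quotes this estimate from \cite{he2026pseudolocality} without proof. Your Steps~2--3 (a Riccati inequality for $\Phi:=\int(|\nabla f|^2+R)\,d\nu$, then ODE comparison) are the right strategy. However, the key inequality has the wrong sign, and the comparison in Step~3 is built on that sign. Perelman's $\mathcal F$ is nondecreasing in $t$: along Ricci flow $\frac{d}{dt}\Phi=2\int|\Ric+\nabla^2 f|^2\,d\nu\geq\frac{2}{n}\Phi^2$, so $\frac{d}{d\tau}\Phi\leq-\frac{2}{n}\Phi^2$.

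Your $\Phi'(\tau)\geq\frac{2}{n}\Phi^2$ would make $\Phi$ nondecreasing in $\tau$. That is incompatible with the asymptotics $\Phi\sim\frac{n}{2\tau}\to+\infty$ as $\tau\to0$, which you invoke yourself. A comparison fed by finiteness of $\Phi$ at $\tau_{\mathrm{max}}$ could only give a bound like $\frac{n}{2(\tau_{\mathrm{max}}-\tau)}$, not $\frac{n}{2\tau}$.

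The correct comparison runs forward in $t$. Suppose $\Phi(t_1)$ exceeds the Riccati threshold at $t_1=t_0-\tau_1$. Then $1/\Phi$ decreases at a definite rate, so $\Phi$ blows up strictly before $t_0$. This is impossible because the conjugate heat kernel is smooth and positive on $M\times[t_1,t_0)$. That finiteness is the only input; neither the small-$\tau$ asymptotics nor anything at $\tau_{\mathrm{max}}$ is used.

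Second, your $E$-bookkeeping would not close as described. A leftover $\int\langle E,\Ric\rangle\,d\nu$ cannot be ``absorbed'', since only $R\geq R_{\mathrm{min}}$ is available, not a Ricci bound. What actually happens is exact cancellation. Use
$\partial_t f=-\Delta f+|\nabla f|^2-R+\tfrac12\Tr E+\tfrac{n}{2\tau}$,
$\square R=2|\Ric|^2-\langle\Ric,E\rangle+\delta^2E-\Delta\Tr E$, and
$\int\langle\nabla f,\nabla\psi\rangle\,d\nu=\int\psi(|\nabla f|^2-\Delta f)\,d\nu$.
Then the terms $\pm E(\nabla f,\nabla f)$ and $\pm\Tr E\,(|\nabla f|^2-\Delta f)$ cancel pairwise, the $E$-free part is exactly the Ricci-flow one, and for any $\theta\in(0,1]$
\[
\frac{d}{dt}\Phi=2\int|\Ric+\nabla^2 f|^2\,d\nu-\int\langle E,\Ric+\nabla^2 f\rangle\,d\nu\geq\frac{2-\theta}{n}\Phi^2-\frac{|E|_{\mathrm{max}}^2}{4\theta}.
\]
The blow-up argument then gives
$\Phi\leq\frac{n}{(2-\theta)\tau}+\frac{|E|_{\mathrm{max}}}{2}\sqrt{\frac{n}{(2-\theta)\theta}}$.
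Choosing $\theta=\min\{1,|E|_{\mathrm{max}}\tau\}$ (for $E\not\equiv0$) makes this $\frac{n}{2\tau}+\frac{C(|E|_{\mathrm{max}}\tau\mid n)}{\tau}$. Subtracting $\int R\,d\nu\geq R_{\mathrm{min}}$ then gives the claim.
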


\begin{Theorem}[No local collapsing]\label{thm:no-local-collapsing}
	If $R\leq r^{-2}$ on $B(x,t,r)$ and $[t-r^2,t]\subset I$, then
	\begin{equation*}
		|B(x,t,r)|_{g(t)}\geq c(n,R_\mathrm{min}r^2,|E|_{\mathrm{max}}r^2)\exp(\mathcal{N}_{x,t}(r^2))r^n.
	\end{equation*}
\end{Theorem}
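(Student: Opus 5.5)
This no-local-collapsing statement is proved via Perelman's $\mathcal W$-entropy, localized to the ball, together with the monotonicity formula \cite[(3.7)]{chen2018shi} quoted in the proof of \cref{lem:small-Ricci-curvature-integral}. By time-shifting and parabolic rescaling we take $t=0$ and $r=1$, so $R\leq 1$ on $B:=B(x,0,1)$ and $[-1,0]\subset I$. Write $\tau=-s$ and $d\nu_s=(4\pi\tau)^{-n/2}e^{-f}dV_s$ for the conjugate heat kernel measure based at $(x,0)$.

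\emph{Step 1: bound $\mu(g(0),\tau)$ from below at a scale of order one.} By the monotonicity formula, $\frac{d}{ds}\mathcal W_{x,0}$ equals $2\tau\int|\Theta|^2d\nu-\frac{\tau}{8}\int|E|^2d\nu$ up to sign, so $\mathcal W_{x,0}$ is monotone up to an error of order $|E|_{\max}^2\tau^2$. Hence $\mathcal N_{x,0}(1)=\frac{1}{1}\int_0^1\mathcal W_{x,0}(\sigma)\,d\sigma$ is controlled from above by $\mathcal W_{x,0}(\tau')$ for $\tau'\in[\tfrac12,1]$, with an error depending on $|E|_{\max}$. We then pass from the pointed $\mathcal W$ to Perelman's $\mu$-functional at time $-\tau'$ and push it forward to time $0$. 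Along a general flow with $\partial_tg=-2\Ric+E$, the quantity $\mu(g(s),\tau_0-s)$ is monotone up to errors $C(n)(|E|_{\max}+|E|_{\max}^2\tau)$ coming from the extra terms in $\square^*$, and the $R_{\min}$ term enters through the energy estimate \cref{ineq:energy-estimate}. This yields
\begin{equation*}
\mu(g(0),\tau_1)\geq\mathcal N_{x,0}(1)-C(n,R_{\min},|E|_{\max})
\end{equation*}
for some $\tau_1\sim 1$.

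\emph{Step 2: test function on the ball.} Take the standard cut-off $u=e^{-f/2}(4\pi\tau_1)^{-n/4}$, with $e^{-f/2}$ proportional to $\zeta(d_{g(0)}(x,\cdot))$, where $\zeta\equiv1$ on $[0,\tfrac12]$, $\zeta$ is supported in $[0,1)$, and $|\zeta'|\leq 3$. Normalize so that $\int u^2=1$. Inserting $u$ into $\mathcal W(g(0),f,\tau_1)$, using $R\leq1$ on $B$, $|\nabla\zeta|\leq C$, and Jensen's inequality for the $-\int f u^2$ term, gives
\begin{equation*}
\mu(g(0),\tau_1)\leq\log\frac{|B|}{|B(x,0,\tfrac12)|}+\log\frac{|B|}{\tau_1^{n/2}}+C(n).
\end{equation*}
The ratio $|B|/|B(x,0,\tfrac12)|$ is not bounded a priori. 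We handle it with the usual trick: pick the largest $k$ such that $|B(x,0,2^{-k})|\geq 3^{-n}|B(x,0,2^{-k-1})|\cdots$, i.e.\ pick a dyadic scale $\rho=2^{-k}\leq1$ where the doubling ratio is at most $2^{n+1}$. Since $R\leq1\leq\rho^{-2}$ on $B(x,0,\rho)$, we run the argument at scale $\rho$, which requires $\mu(g(0),\tau)$ bounds for $\tau\in(0,1]$; these also follow from Step 1 and the monotonicity of $\mu$ in $\tau$ up to errors. If no such scale exists, then $|B|\geq 2^{(n+1)k}|B(x,0,2^{-k})|\approx 2^{(n+1)k}\omega_n2^{-nk}\to\infty$, a contradiction. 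Combining the two steps gives $|B(x,0,\rho)|\geq c\,e^{\mathcal N}\rho^n$, and the doubling choice transfers this to radius $1$.

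\emph{Main obstacle.} The main obstacle is Step 1 for a non-Ricci flow: controlling the error terms in the almost-monotonicity of $\mu$ and $\mathcal W$ uniformly in terms of $|E|_{\max}r^2$ and $R_{\min}r^2$. I would first try to reuse Chen's computation \cite{chen2018shi}, bounding the extra terms by $|E|\,(|\nabla^2f|+|\nabla f|^2)$. These are absorbed into $\tau|\Theta|^2$ via Cauchy--Schwarz and into the energy estimate \cref{ineq:energy-estimate}.
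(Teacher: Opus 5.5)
\textbf{There is a genuine gap in Step~1, and it is structural; the $E$-error bookkeeping you flag as the main obstacle is not the issue.}

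The claimed bound $\mu(g(0),\tau_1)\geq\mathcal N_{x,0}(1)-C(n,R_{\mathrm{min}},|E|_{\mathrm{max}})$ is false in general. $\mu(g(0),\tau_1)$ is an infimum over test functions on all of $M$, so it detects collapsing anywhere on the time-$0$ slice. By contrast, $\mathcal N_{x,0}(1)$ only sees the region where the conjugate heat kernel based at $(x,0)$ lives.

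For a counterexample, take a Ricci flow on $[-1,0]$ that is almost Euclidean at scales $\gg1$ around $x$. Far from $x$, let it contain a nearly flat region modelled on $\R^{n-1}\times S^1_\epsilon$; a slowly warped product $T^{n-1}\times S^1$ with tiny curvature is almost static over unit time. Then $E=0$, $R\approx0$ and $\mathcal N_{x,0}(1)\approx0$. However, your own Step~2 test function, placed in the collapsed region, gives $\mu(g(0),\tau_1)\leq\log\epsilon+C(n)$.

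The failing link is ``pass from the pointed $\mathcal W$ to Perelman's $\mu$-functional''. Since $\mu$ is an infimum, the only available comparison is $\mathcal W_{x,0}(\tau')\geq\mu(g(-\tau'),\tau')$, which bounds $\mu$ from above, not from below. Similarly, for a fixed metric there is no monotonicity of $\tau\mapsto\mu(g,\tau)$ that would let you descend to the doubling scale. For context, the paper gives no proof of this statement; it imports it from \cite{he2026pseudolocality}, which extends Bamler's entropy theory to flows with bounded $E$.

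\textbf{How to repair it.} Your Step~2 computation and the doubling choice are the right ingredients; what is missing is a bridge that stays attached to the base point. One that works is as follows.

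Let $\rho$ be the doubling scale and $u$ your test function on $B(x,0,\rho)$. Evolve the probability measure $m_0:=u^2\,dV_{g(0)}$ by the conjugate heat equation, and put $\tau:=\rho^2-s$ for $s\in[-\rho^2,0]$.
\begin{enumerate}
\item Along any positive conjugate heat flow, $\frac{d}{d\tau}(\tau\mathcal N_m)=\mathcal W_m$ up to a term of size $C(n)\tau|E|_{\mathrm{max}}$.
\item The computation behind \cite[(3.7)]{chen2018shi} does not use that the flow emanates from a Dirac mass. It gives $\frac{d}{d\tau}\mathcal W_m\leq\frac{\tau}{8}|E|_{\mathrm{max}}^2$.
\item Integrating over $[\rho^2,2\rho^2]$ bounds $\mathcal N_m(2\rho^2)$ by $\frac12(\mathcal N_m(\rho^2)+\mathcal W_m(\rho^2))+C$, i.e.\ by time-$0$ quantities. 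Your Step~2 computation bounds these by $\log(|B(x,0,\rho)|\rho^{-n})+C(n)$; this is where $R\leq\rho^{-2}$ on the ball is used.
\item From below, $m_{-\rho^2}=\int\nu_{z,0;-\rho^2}\,dm_0(z)$. Concavity of $w\mapsto-\int w\log w$ then gives $\mathcal N_m(2\rho^2)\geq\int\mathcal N_{z,0}(\rho^2)\,dm_0(z)-\frac n2\log2$.
\item The spatial oscillation bound for pointed Nash entropy gives $\mathcal N_{z,0}(\rho^2)\geq\mathcal N_{x,0}(\rho^2)-C$ for $z\in B(x,0,\rho)$. This is where $R_{\mathrm{min}}r^2$ and $|E|_{\mathrm{max}}r^2$ enter.
\item Almost-monotonicity of $\tau\mapsto\mathcal N_{x,0}(\tau)$ gives $\mathcal N_{x,0}(\rho^2)\geq\mathcal N_{x,0}(1)-C$.
\end{enumerate}
Together these yield $|B(x,0,\rho)|\geq c\,e^{\mathcal N_{x,0}(1)}\rho^n$. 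Your doubling transfer to radius $1$ then finishes the proof.
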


\begin{Theorem}[No local inflating]\label{thm:no-local-inflating}
	If $(x,t)\in M\times I$, $\lambda>0$ and $r>0$ satisfies $[t-r^2,t]\subset I$, then
	\begin{equation*}
		|B(x,t,\lambda r)|_{g(t)}
		\leq C(n,R_{\mathrm{min}}r^2,|E|_{\mathrm{max}}r^2,\lambda)\exp(\mathcal{N}_{x,t}(r^2))r^n.
	\end{equation*}
\end{Theorem}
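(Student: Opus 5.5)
The plan is to follow Bamler's argument for the volume upper bound. By parabolic rescaling and time-shifting we take $r=1$ and $t=0$, and put $s:=-1$. The idea is to find one point $y\in M$ such that $K(\cdot,0;y,s)$ is bounded below by $c\,e^{-\mathcal N_{x,0}(1)}$ on all of $B(x,0,\lambda)$. We then compare this with an upper bound on the total mass $\int_M K(z,0;y,s)\,dV_0(z)$. Throughout, constants may depend on $n$, $R_{\min}$, $|E|_{\max}$ and $\lambda$, matching the dependence in the statement.

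\emph{Step 1: mass bound.} For fixed $(y,s)$ the function $u(z,a):=K(z,a;y,s)$ solves the forward heat equation. Hence $\frac{d}{da}\int_M u\,dV_a=\int_M u\cdot\tfrac12\Tr(\partial_a g)\,dV_a=\int_M u\,(-R+\tfrac12\Tr E)\,dV_a$. This is bounded by $C(|R_{\min}|+|E|_{\max})\int_M u\,dV_a$. Since $\int_M u\,dV_a\to 1$ as $a\downarrow s$, integrating over $[s,0]$ gives $\int_M K(z,0;y,s)\,dV_0(z)\leq C$.

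\emph{Step 2: a good basepoint.} Write $d\nu_{x,0;s}=(4\pi)^{-n/2}e^{-f}dV_s$. Since $\int f\,d\nu_{x,0;s}=\mathcal N_{x,0}(1)+\frac n2$ and $\nu_{x,0;s}$ is a probability measure, there is $y$ with $f(y)\leq\mathcal N_{x,0}(1)+\frac n2$, i.e. $K(x,0;y,s)\geq c(n)e^{-\mathcal N_{x,0}(1)}$. I would also use the heat kernel upper bound from \cite{he2026pseudolocality}, the analogue of Bamler's $K(z,0;y,s)\leq C\exp(-\mathcal N_{z,0}(1))$. Combined with the spatial Lipschitz estimate $|\nabla_z\mathcal N_{z,0}(1)|\leq C$, which is also available there, this gives the bound $K(\cdot,0;y,s)\leq C e^{-\mathcal N_{x,0}(1)}=:\Lambda_0$ on $B(x,0,2\lambda)$.

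\emph{Step 3: spreading the lower bound.} Set $v:=K(\cdot,\cdot;y,s)/(C'\Lambda_0)$. After adjusting $C'$ and shrinking the time window to $[s',0]$ with $s'$ near $0$ if needed, $v$ takes values in $(0,1)$ on the relevant region. I would apply the Bamler-type gradient estimate $|\nabla\Phi^{-1}(v)|\leq C/\sqrt{\tau}$ at time $0$; its version for bounded $E$ and $R_{\min}$ is in \cite{he2026pseudolocality}. Integrating along a $g(0)$-geodesic from $x$ to any $z\in B(x,0,\lambda)$ gives $\Phi^{-1}(v(z))\geq\Phi^{-1}(v(x))-C\lambda$. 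Because $v(x)\geq c$ by Steps 1 and 2, this yields $v(z)\geq c(\lambda)$, i.e. $K(z,0;y,s)\geq c\,e^{-\mathcal N_{x,0}(1)}$ on the whole ball. Integrating over $B(x,0,\lambda)$ and using Step 1 gives $|B(x,0,\lambda)|\,c\,e^{-\mathcal N_{x,0}(1)}\leq C$, which is the claim.

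The main obstacle is Step 3. The gradient estimate must be applied to a solution normalized into $[0,1]$, which forces the two-sided comparison of Step 2 between the sup of $K(\cdot,0;y,s)$ near $x$ and $e^{-\mathcal N_{x,0}(1)}$. The error terms from $E$ and $R_{\min}$ in that estimate must also be absorbed into constants depending only on $|E|_{\max}r^2$ and $R_{\min}r^2$. If the pointwise upper bound on $K$ is awkward to use, I would first try a weaker alternative. Namely, apply the $L^\infty$ gradient bound to $\log K$ only on the set where $K\geq \frac12 K(x,0;y,s)$, and argue by continuity along the geodesic.
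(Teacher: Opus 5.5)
The paper does not prove this statement; it quotes it from \cite{he2026pseudolocality}, which adapts Bamler's volume upper bound to flows with bounded $E$ and $R_{\min}$. Your overall plan is the right one: bound a positive heat solution from below on $B(x,0,\lambda)$ and compare with the forward mass bound. Steps 1 and 2 are fine, but Step 3 has a genuine gap.

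\textbf{The gap.} The estimate $|\nabla\Phi^{-1}(v)|\le C/\sqrt{\tau}$ is a global statement, proved by the maximum principle or semigroup arguments on all of $M$. It requires $0<v<1$ on all of $M$ at the initial time of the window, not only ``on the relevant region''. Step 2 bounds $K(\cdot,0;y,s)$ only on $B(x,0,2\lambda)$ at time $0$. Nothing in the hypotheses gives $\sup_M K(\cdot,s';y,s)\le Ce^{-\mathcal N_{x,0}(1)}$. At a far point $z$ the only available bound is $Ce^{-\mathcal N_{z,s'}(s'-s)}$, and $\mathcal N_{z,s'}$ is uncontrolled away from $x$, since there is no entropy or curvature assumption. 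Shrinking the window to $[s',0]$ does not change this.

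Your fallback does not close the gap either. A bound on $|\nabla\log K|$ on $\{K\ge\frac12K(x,0;y,s)\}$ propagates the lower bound only to radius about $\log 2/C$, not $\lambda$. Reaching radius $\lambda$ needs a bound that degrades like $\sqrt{\log(K(x,0;y,s)/K)}$, i.e.\ Bamler's heat kernel gradient estimate. That estimate is itself proved by global arguments, and you have not invoked it.

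\textbf{A fix using only your tools.} Replace the point heat kernel by the heat flow of an indicator, which lies in $[0,1]$ automatically.
\begin{enumerate}
\item Your upper bound $K(x,0;\cdot,s)\le Ce^{-\mathcal N_{x,0}(1)}$ gives $f\ge\mathcal N_{x,0}(1)-C$.
\item Since $\int f\,d\nu_{x,0;s}=\mathcal N_{x,0}(1)+\frac n2$, Markov's inequality applied to $f-\mathcal N_{x,0}(1)+C\ge 0$ shows that $S:=\{f\le\mathcal N_{x,0}(1)+n+C\}$ satisfies $\nu_{x,0;s}(S)\ge\frac12$.
\item On $S$ we have $K(x,0;\cdot,s)\ge c\,e^{-\mathcal N_{x,0}(1)}$, so $|S|_s\le Ce^{\mathcal N_{x,0}(1)}$.
\item Put $u_a:=P_{s,a}\mathbf 1_S$. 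It takes values in $[0,1]$ on all of $M$, and $u(x,0)=\nu_{x,0;s}(S)\ge\frac12$.
\item The $\Phi^{-1}$-gradient estimate on $[s,0]$, integrated along $g(0)$-geodesics from $x$, gives $u(\cdot,0)\ge c(\lambda)$ on $B(x,0,\lambda)$.
\item Fubini and your Step 1 then give
\[
c(\lambda)\,|B(x,0,\lambda)|_0\le\int_M u(z,0)\,dV_0(z)=\int_S\int_M K(z,0;y',s)\,dV_0(z)\,dV_s(y')\le C|S|_s\le Ce^{\mathcal N_{x,0}(1)}.
\]
\end{enumerate}
This is the standard Bamler-type argument. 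It makes the two-sided normalization of Step 2 and the Lipschitz bound on $\mathcal N$ unnecessary.
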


\begin{Theorem}[$\epsilon$-regularity of pointed Nash entropy]\label{thm:refined-epsilon-regularity}
	For any $\epsilon>0$, there exists $\lambda(\epsilon)>0$ such that the following holds.
	
	Let $(M^7,\varphi(t),g(t))_{t\in I}$ be a Laplacian flow over a compact manifold, $(x,t)\in M\times I$ be a point, and $r>0$ be a scale such that $[t-r^2,t]\subset I$.
	Suppose
	\begin{equation*}
		\sup_{M\times I}|T_{\varphi}|^2\leq\lambda r^{-2},\qquad
		\mathcal{N}_{x,t}(r^2)\geq-\lambda.
	\end{equation*}
	Then $|\Rm|\leq\epsilon r^{-2}$ over $P(x,t;\epsilon^{-1}r,-(1-\epsilon)r^2,\epsilon^{-1}r^2)$.
\end{Theorem}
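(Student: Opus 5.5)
The statement to prove is \cref{thm:refined-epsilon-regularity}, the $\epsilon$-regularity of pointed Nash entropy for Laplacian flow. I would argue by contradiction and compactness.

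\textbf{Setup of the contradiction.} Suppose the theorem fails for some $\epsilon>0$. After rescaling to $r=1$ and shifting to $t=0$, we obtain Laplacian flows $(M_i,\varphi_i(t),g_i(t))$ and points $(x_i,0)$ such that
\begin{equation*}
	\sup|T_{\varphi_i}|^2\leq\lambda_i\to 0,\qquad
	\mathcal N_{x_i,0}(1)\geq-\lambda_i,
\end{equation*}
but there are points $(y_i,s_i)\in P(x_i,0;\epsilon^{-1},-(1-\epsilon),\epsilon^{-1})$ with $|\Rm|(y_i,s_i)>\epsilon$.

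\textbf{Step 1: the forward-in-time half.} First I would handle the part of the neighborhood at times $\geq 0$, together with a small backward window, using the forward pseudolocality of \cite{he2026pseudolocality}. The input is that almost-maximal Nash entropy forces $\mathcal N_{x_i,0}(\tau)\geq-\lambda_i$ for all $\tau\leq 1$, by monotonicity up to errors controlled by $|E|$ and $R_{\min}$. Entropy continuity (\cref{thm:entropy-continuity}) and the no local inflating/collapsing theorems (\cref{thm:no-local-inflating}, \cref{thm:no-local-collapsing}) then give almost Euclidean volume ratios on $B(x_i,0,\epsilon^{-1})$ at small scales. Pseudolocality in its refined form (\cref{thm:refined-pseudolocality}) converts this into curvature bounds $C r^{-2}$ at all scales up to $\epsilon^{-1}$ forward in time.

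\textbf{Step 2: the backward part.} For times in $[-(1-\epsilon),0]$, I would use the almost-monotonicity of $\mathcal N_{x_i,0}(\tau)$ in $\tau$ to move the basepoint backward. Points $(z,s)$ with $s\in[-(1-\epsilon),0]$, taken near the $H_n$-center of $\nu_{x_i,0;s}$, satisfy $\mathcal N_{z,s}(\epsilon'^2)\geq-\delta_i$ with $\delta_i\to0$. This uses the $\mathcal N$-comparison inequalities of \cite{he2026pseudolocality} (the analogues of Bamler's Lipschitz and gradient bounds for $\mathcal N$), which hold with errors in $\lambda$. Applying Step 1 from each such basepoint covers the required neighborhood, provided the distance distortion between times is controlled, which the curvature bounds themselves give.

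\textbf{Step 3: conclusion.} Having uniform curvature bounds, the Shi-type estimates \cite[Theorem 4.4]{lotay2017laplacian} together with the compactness theorem \cite[Theorem 7.2]{lotay2017laplacian} yield a smooth limit with $T\equiv0$. This limit is a static Ricci-flat flow whose pointed Nash entropy at every scale is $0$. Entropy rigidity then makes it flat $\R^7$, contradicting $|\Rm|(y_\infty,s_\infty)\geq\epsilon$.

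\textbf{Main obstacle.} I expect the backward propagation in Step 2 to be the hardest part. Absent Perelman's Harnack inequality, controlling the basepoint movement needs the heat kernel bounds of \cite{he2026pseudolocality}. I would first try to reduce everything to the $\mathcal N$-Lipschitz estimate along $H_n$-centers.
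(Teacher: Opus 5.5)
The paper does not prove this theorem. It is quoted in the appendix from \cite{he2026pseudolocality} as the Laplacian-flow analogue of Bamler's $\epsilon$-regularity theorem in \cite{bamler2020entropy}, so there is no in-paper argument to compare with, and your proposal has to stand on its own. It does not. Using pseudolocality forward in time and compactness plus rigidity of static torsion-free limits is a reasonable overall plan. The gap is that you never give a valid mechanism for turning the hypothesis $\mathcal N_{x,t}(r^2)\geq-\lambda$, which holds at a single point and at scales $\leq r$, into control on the whole region $B(x,t,\epsilon^{-1}r)\times[t-(1-\epsilon)r^2,t+\epsilon^{-1}r^2]$.

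\textbf{Step 1.} The tools you cite cannot produce the hypothesis of \cref{thm:refined-pseudolocality}.
\begin{enumerate}
\item \cref{thm:entropy-continuity} assumes $\sup_{P^-}|\Rm|\leq r^{-2}$, which is what you are trying to prove. It also runs from curvature to entropy, not the other way.
\item \cref{thm:no-local-collapsing} and \cref{thm:no-local-inflating} only give $c\,e^{\mathcal N}r^n\leq|B|\leq C e^{\mathcal N}r^n$ with non-sharp constants, so they never yield almost Euclidean volume ratios.
\item Even almost Euclidean volume ratios would not give a lower bound on the local $\mu$-functional without curvature control.
\item Starting \cref{thm:refined-pseudolocality} at the slice $t=0$ requires $\mu(B(\cdot,0,\delta^{-1}\sqrt{\tau}),g(0),\tau)\geq-\delta$ for all $\tau$ up to $T\gtrsim\epsilon^{-1}$. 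That is a condition on balls of radius $\gg1$ and at scales $\gg1$. Your entropy information is only at $x_i$ and at scales $\leq1$.
\item The spatial Lipschitz bound for $y\mapsto\mathcal N_{y,0}(\tau)$ has constant of order $\sqrt{n/\tau}$, so it loses $O(\epsilon^{-1})$ across the ball.
\item Pseudolocality started at $t=0$ gives $|\Rm|\leq\alpha/t$ only for $t>0$. It produces no backward window, and not even a bound on the slice $t=0$ itself.
\end{enumerate}

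\textbf{Step 2.} Here the same problem appears in time. The comparison inequality for pointed Nash entropy (the analogue of Bamler's Theorem 5.9) bounds $\mathcal N_{z,s}(s-s')$ from below by $\mathcal N_{x_i,0}(-s')$ minus $\frac n2\log\frac{s-s'}{t^*-s'}$ and minus $\big(\frac{n}{2(t^*-s')}+C\lambda_i\big)^{1/2}d^{t^*}_{W_1}(\nu_{x_i,0;t^*},\nu_{z,s;t^*})$. Here $s'<t^*\leq s$, and $s'\geq-1$ because you control $\mathcal N_{x_i,0}$ only at scales $\leq1$.

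Already on the Gaussian soliton these two losses cannot both be small. Making the logarithm small forces $t^*$ close to $s$, and then the $W_1$-distance is of order $\sqrt{n|s|}$ while $t^*-s'\leq1-|s|$. At $s=-\frac12$ the optimized total loss is a fixed positive multiple of $n$, independent of $\lambda_i$. Consequences:
\begin{enumerate}
\item Your claim $\delta_i\to0$ is unjustified on most of $[-(1-\epsilon),0]$.
\item Points of $B(x_i,0,\epsilon^{-1})$ away from the $H_n$-center are not reached at all.
\item ``Applying Step 1 from each basepoint'' inherits all of Step 1's gaps.
\item Step 3 therefore has no uniform curvature bound from which to extract a limit.
\end{enumerate}

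\textbf{What is missing.} You need a genuine propagation mechanism. One option is a contradiction argument with point-picking at a first bad point. There you would use the $\mathcal N$-comparison only for the uniform, not small, noncollapsing it provides. You would then use the near-vanishing of $\int\!\!\int\tau|\Ric+\nabla^2f-\frac{g}{2\tau}-\frac E4|^2\,d\nu\,dt$, from the $\mathcal W$-identity, to force flatness of limits on regions where curvature is already controlled.

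Another option is an argument that converts $\mathcal N_{x,t}(r^2)\approx0$ into a local $\mu$-lower bound on large balls at a slice just after $t-r^2$. \cref{thm:refined-pseudolocality} can then be run forward from that slice, which is what the time range $-(1-\epsilon)r^2$ in the conclusion suggests.
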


\begin{Theorem}\label{thm:entropy-continuity}
	For any $\epsilon>0$ and $Y<\infty$, there exists $\overline{\lambda}(\epsilon,Y)>0$ such that the following holds for all $0<\lambda<\overline{\lambda}$.
	
	Let $(M^7,\varphi(t),g(t))_{t\in I}$ be a Laplacian flow over a compact manifold, $(x,t)\in M\times I$ be a point, and $r>0$ be a scale such that $[t-r^2,t]\subset I$.
	Suppose
	\begin{equation*}
		\sup_{M\times[t-r^2,t]}|T_{\varphi}|^2\leq\lambda r^{-2},\qquad
		\sup_{P^{-}(x,t;r)}|\Rm|\leq r^{-2},\qquad
		\mathcal{N}_{x,t}(r^2)\geq-Y.
	\end{equation*}
	Then $\mathcal{N}_{x,t}(\lambda r^2)\geq-\epsilon$.
\end{Theorem}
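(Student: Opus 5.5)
We argue by contradiction and compactness; the analytic input is uniform control of the conjugate heat kernel at small scales. After time-shifting and parabolic rescaling we take $t=0$ and $r=1$. If the statement failed for some $\epsilon>0$ and $Y<\infty$, there would be Laplacian flows $(M_i,\varphi_i(t),g_i(t))$, points $(x_i,0)$ and scales $\lambda_i\to 0$ such that:
\begin{itemize}
	\item $|T_{\varphi_i}|^2\leq\lambda_i$ on $M_i\times[-1,0]$;
	\item $|\Rm|\leq 1$ on $P^-(x_i,0;1)$;
	\item $\mathcal N_{x_i,0}(1)\geq -Y$;
	\item $\mathcal N_{x_i,0}(\lambda_i)<-\epsilon$.
\end{itemize}
(Monotonicity in $\lambda$ is not needed: it suffices to contradict the statement along one sequence $\lambda_i\to0$ for which it fails.)

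\textbf{Step 1: uniform local geometry at scale $1$.} Since $R=-|T_\varphi|^2\leq 0$, \cref{thm:no-local-collapsing} applies at scale $1$. Together with $\mathcal N_{x_i,0}(1)\geq -Y$, this gives $|B(x_i,0,1)|\geq c(Y)$.

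The Shi-type estimates of Lotay--Wei, applied on $P^-(x_i,0;1)$, give bounds on $|\nabla^k\Rm|$ on $P^-(x_i,0;\tfrac12)$ away from the bottom time. Combined with the volume bound, Cheeger--Gromov--Taylor gives $\mathrm{inj}_{g_i(0)}(x_i)\geq c(Y)>0$. Because $|\partial_t g|\leq C$ there, the metrics $g_i(t)$ are uniformly equivalent on $P^-(x_i,0;\tfrac12)$.

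\textbf{Step 2: blow up.} Parabolically rescale by $\lambda_i^{-1/2}$ about $(x_i,0)$; call the rescaled flows $\tilde g_i$. Then:
\begin{itemize}
	\item $|\Rm_{\tilde g_i}|\leq\lambda_i\to0$ on $P^-(x_i,0;\tfrac12\lambda_i^{-1/2})$;
	\item $|T|^2\leq\lambda_i^2\to0$ on $M_i\times[-\lambda_i^{-1},0]$;
	\item the injectivity radius at $x_i$ tends to $\infty$;
	\item $\tilde{\mathcal N}_{x_i,0}(1)=\mathcal N_{x_i,0}(\lambda_i)<-\epsilon$.
\end{itemize}
By the Lotay--Wei compactness theorem, a subsequence converges smoothly on compact subsets of $\R^7\times(-\infty,0]$ to the static flat Euclidean metric, and $\varphi_i$ converges to a constant (torsion-free) $G_2$ structure. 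Its conjugate heat kernel based at $(0,0)$ is the Gaussian, whose Nash entropy is $0$.

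\textbf{Step 3 (main obstacle): convergence of the pointed Nash entropy.} We need
\[
\liminf_i\tilde{\mathcal N}_{x_i,0}(1)\geq 0,
\]
which contradicts $\tilde{\mathcal N}_{x_i,0}(1)<-\epsilon$. Since $\mathcal N=\int f\,d\nu-\tfrac n2$, this is a lower bound on $\int f\,d\nu$, i.e., the heat kernel must not concentrate. Smooth convergence alone does not control the part of $\nu_{x_i,0;-1}$ lying outside the controlled region, so two further ingredients are needed.

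First, concentration of $\nu$. The variance and Gaussian concentration bounds for conjugate heat kernels (the analogues of Bamler's results, established in \cite{he2026pseudolocality}) show that $\tilde\nu_{x_i,0;s}$, $s\in[-1,0]$, puts mass at most $Ce^{-D^2/C}$ outside $B(x_i,0,D)$. Since the metrics are almost static on the controlled region, these balls at time $0$ and at time $s$ are comparable.

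Second, local heat kernel convergence. On $P^-(x_i,0;D)$ the geometry is almost Euclidean, so local parabolic estimates for the conjugate heat equation show $\tilde K_i\to$ Gaussian smoothly on compact subsets of $\R^7\times[-1,0)$. The globally valid Gaussian-type upper bound $K\leq C\tau^{-n/2}$ coming from \cite{he2026pseudolocality} gives $f\geq -C$ everywhere, which controls the negative part of $f$ on the tail. Then
\[
\int f\,d\nu\geq\int_{B_D}f\,d\nu-C\,\nu(M\setminus B_D)\longrightarrow\int_{B_D}f_{\mathrm{Gauss}}\,d\nu_{\mathrm{Gauss}}-Ce^{-D^2/C}.
\]
Letting $D\to\infty$ gives $\liminf_i\tilde{\mathcal N}_{x_i,0}(1)\geq 0$, the desired contradiction.

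The delicate point is making the tail control and the global lower bound on $f$ uniform in the entropy and torsion data. If that fails, the fallback is a direct route. Use the $\mathcal W$-identity already used in the proof of \cref{lem:small-Ricci-curvature-integral}, which bounds $\int\tau|\Theta|^2\,d\nu$, together with the no-local-inflating bound \cref{thm:no-local-inflating}, to show that $\mathcal N_{x,0}(\tau)$ is nearly constant on $[\lambda_i,\lambda_i D]$. Then compare it with the Euclidean value at scale $D\lambda_i$.
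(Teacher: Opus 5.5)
The paper does not prove this statement; it quotes it from \cite{he2026pseudolocality}. Your argument therefore has to stand on its own. Steps 1 and 2 are fine, but Step 3 has a genuine gap, and it sits exactly where the content of the theorem lies.

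The Bamler-type variance and Gaussian concentration bounds (as generalized in \cite{he2026pseudolocality}) say that $\tilde\nu_{x_i,0;s}$ concentrates around an $H_n$-center $z_{i,s}$, measured in $\tilde g_i(s)$. They say nothing about where $z_{i,s}$ lies relative to $x_i$. Your remark that balls at times $0$ and $s$ are comparable inside the controlled region does not address this. What you need is that $d_{\tilde g_i(s)}(z_{i,s},x_i)$ stays bounded for $s\in[-1,0]$ and tends to $0$ as $s\to0^-$, uniformly in $i$. Without this, most of the mass of $\tilde\nu_{x_i,0;-1}$ could lie outside the almost-Euclidean region, and then $\int f\,d\nu\geq\int_{B_D}f\,d\nu-C\,\nu(M\setminus B_D)$ gives nothing.

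The same point undermines the claim ``$\tilde K_i\to$ Gaussian''. Local parabolic estimates only give subconvergence to some nonnegative solution $K_\infty$ of the backward heat equation on $\R^7\times[-1,0)$ with mass at most $1$. To identify $K_\infty$ with the Gaussian centered at $0$, you need total mass $1$ and initial trace $\delta_0$, which are the same missing concentration statements. In Ricci flow you could appeal to Perelman's reduced-distance heat kernel lower bound, but that is not available for Laplacian flow. Your fallback has the same problem, since it ends by comparing with the Euclidean value at scale $D\lambda_i$.

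The gap closes with an elementary cut-off argument, the same device used for $m(t)$ in the proof of \cref{lem:small-Ricci-curvature-integral}.
Set $\phi:=\zeta\big(d_{\tilde g_i(0)}(x_i,\cdot)/\rho\big)$ with $\zeta=1$ on $[0,1]$ and $\zeta=0$ on $[2,\infty)$. For large $i$ this is smooth and supported in the almost-static, almost-Euclidean region. There $|\square\phi|=|\Delta_s\phi|\leq C\rho^{-2}$ for $s\in[-1,0]$. Since $\frac{d}{ds}\int\phi\,d\tilde\nu_s=\int\square\phi\,d\tilde\nu_s$ and $\int\phi\,d\tilde\nu_0=1$, you get
\[
\tilde\nu_{x_i,0;s}\big(B(x_i,0,2\rho)\big)\geq 1-C|s|\rho^{-2}
\]
uniformly in $i$.
\begin{itemize}
	\item Taking $\rho=D$ gives the tail bound, with no reference to $H_n$-centers, and also total mass $1$ for $K_\infty$.
	\item Fixing $\rho$ and letting $s\to0^-$ shows that the initial trace of $K_\infty$ is $\delta_0$.
	\item Widder's uniqueness theorem for nonnegative solutions of the heat equation then forces $K_\infty$ to be the Gaussian. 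Its positivity gives local uniform convergence of $f_i$.
\end{itemize}

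Finally, the bound $f\geq-C$ on the tail uses the upper bound $K\leq C\tau^{-n/2}e^{-\mathcal N}$. This requires a lower bound on $\tilde{\mathcal N}_{x_i,0}(1)=\mathcal N_{x_i,0}(\lambda_i)$. Your contradiction hypothesis only bounds it from above. You must therefore invoke almost-monotonicity of $\tau\mapsto\mathcal N_{x_i,0}(\tau)$ together with $\mathcal N_{x_i,0}(1)\geq-Y$.
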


\begin{Theorem}[Pseudolocality]\label{thm:refined-pseudolocality}
	For any $\alpha\in(0,1]$, there is $\delta(\alpha)>0$ so that the following holds.

	Let $(M^7,\varphi(t),g(t))_{t\in[0,T]}$ be a Laplacian flow on a compact manifold satisfying $|T_\varphi|^2\leq\Lambda$ for some $\Lambda>0$.
	Let $x_0\in M$.
	Suppose $T\leq\delta\Lambda^{-1}$ and
	\begin{equation*}
		\inf_{0<t\leq T}
		\mu\big(B(x_0,0,\delta^{-1}\sqrt{t}),g(0),t\big)
		\geq-\delta,
	\end{equation*}
	then
	\begin{equation*}
		|\Rm|(x,t)\leq\frac{\alpha}{t},
		\quad\forall\,x\in B(x_0,t,\alpha^{-1}\sqrt{t}),\ t\in(0,T].
	\end{equation*}
\end{Theorem}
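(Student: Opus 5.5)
The plan is to argue by contradiction, following Perelman's original pseudolocality argument as reorganized by Wang \cite{wang2018local,wang2020local}. The only flow-specific inputs are the bounds $|E(g)|\leq C|T_\varphi|^2$ and $R=-|T_\varphi|^2$, the Shi-type estimates \cite[Theorem 4.4]{lotay2017laplacian} and the compactness theorem \cite[Theorem 7.2]{lotay2017laplacian}.

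\textbf{Setup.} Suppose the statement fails for some $\alpha$. Then there are Laplacian flows $(M_i,\varphi_i(t))_{t\in[0,T_i]}$, with $|T_{\varphi_i}|^2\leq\Lambda_i$ and $T_i\leq\delta_i\Lambda_i^{-1}$ where $\delta_i\to0$, satisfying the local entropy hypothesis. Each has a point $(\bar x_i,\bar t_i)$ with $\bar x_i\in B(x_{0,i},\bar t_i,\alpha^{-1}\sqrt{\bar t_i})$ and $|\Rm|(\bar x_i,\bar t_i)>\alpha/\bar t_i$. Rescaling parabolically so that $\bar t_i=1$ gives $\Lambda_i\leq\delta_i\to0$. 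Thus $|E|+|R|\to0$ uniformly, while $\mu(B(x_{0,i},0,\delta_i^{-1}\sqrt t),g_i(0),t)\geq-\delta_i$ for all $t\in(0,1]$.

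\textbf{Point-picking.} As in Perelman's proof, I choose the bad point $(\bar x_i,\bar t_i)$ almost minimizing the time at which the curvature bound fails. I then move it within a controlled region to obtain curvature control $|\Rm|\leq 4\alpha/t$ on a backward parabolic neighborhood $P(\bar x_i,\bar t_i; A_i\sqrt{\bar t_i},-\tfrac12\bar t_i)$ with $A_i\to\infty$. The only new point here is the distance-distortion estimate used in the point-picking. For Ricci flow this comes from Perelman's Lemma 8.3. In our setting it carries an additional error term $\int|E|\,dt\leq C\delta_i$, which is harmless.

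\textbf{Entropy localization.} Consider the conjugate heat kernel $\nu_i$ based at $(\bar x_i,\bar t_i)$. Along our flow the $\mathcal W$-functional is almost monotone, with defect $\int\tau|E|^2\,d\nu$ controlled by $\delta_i$ (cf.\ the identity of \cite[(3.7)]{chen2018shi} used in \cref{lem:small-Ricci-curvature-integral}). Using a Perelman-type cutoff $h=\phi\,u$, where $\phi$ is a distance cutoff built from $d_t(x_{0,i},\cdot)$, I transport the near-zero local $\mu$-entropy at time $0$ to the statement $\mathcal W_{\bar x_i,\bar t_i}(\tau)\geq-\delta_i'$ for all $\tau\in(0,\bar t_i]$, with $\delta_i'\to0$. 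The cutoff errors are controlled by $\square^*$-type inequalities for $\phi$, which again require the distance-distortion estimate with the $E$-correction together with $R\geq-\Lambda_i$.

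\textbf{Limit and contradiction.} By the Shi-type estimates and the curvature bound, the flows converge smoothly near $(\bar x_i,\bar t_i)$ on the scale $\sqrt{\bar t_i}$. Non-collapsing follows from \cref{thm:no-local-collapsing}, and the limit is taken via \cite[Theorem 7.2]{lotay2017laplacian}. The limit is a torsion-free $G_2$ flow, hence static and Ricci-flat. Its pointed $\mathcal W$-entropy is $\geq0$ at every scale, so by the rigidity of Perelman's entropy the limit is flat $\mathbb R^7$. This contradicts $|\Rm|(\bar x_i,\bar t_i)\geq\alpha/\bar t_i$ surviving in the limit.

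\textbf{Main obstacle.} The step I expect to be hardest is entropy localization. Perelman's argument uses exact monotonicity together with the Harnack-type inequality $\square^*v\leq0$ for $v=(\tau(2\Delta f-|\nabla f|^2+R)+f-n)u$. Here $\square^*v$ instead acquires terms of the form $\tau\langle E,\cdot\rangle u$, which are only integrally small. My first attempt would be to avoid any pointwise use of $v$. Instead I would integrate against the cutoff and absorb the $E$-terms using \cref{ineq:energy-estimate} and the $L^2$ bound on $\tau|\Theta|^2$, as in the proof of \cref{lem:small-Ricci-curvature-integral}, accepting errors of size $C(\delta_i\mid n)$.
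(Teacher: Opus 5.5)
\paragraph{Verdict: there is a genuine gap, in the entropy localization step.}
The paper does not prove this theorem; it quotes it from \cite{he2026pseudolocality}, so I judge your argument on its own. Your outer structure is fine. Once $\mathcal W_{\bar x_i,\bar t_i}(\bar t_i)\geq-\delta_i'$ is known, three ingredients give a flat limit: the almost-monotonicity from \cite[(3.7)]{chen2018shi}, the fact that $E,R\to0$, and Gaussian rigidity. This part is essentially an $\epsilon$-regularity statement of the type of \cref{thm:refined-epsilon-regularity}.

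\paragraph{Why the cutoff argument breaks without $v\leq0$.}
The localization step carries the real content, and your proposed replacement for Perelman's argument does not work. Perelman needs the pointwise sign $v\leq0$, not merely $\square^*v\leq0$.
\begin{itemize}
\item With his cutoff profile one arranges $\square h\leq C\rho^{-2}h$, where $\rho$ is the cutoff scale. Then $v\leq0$ gives $(\square h)v\geq C\rho^{-2}hv$, hence $\frac{d}{dt}\int hv\geq C\rho^{-2}\int hv$. This is a Gronwall inequality in which the cutoff error is controlled by the localized quantity itself.
\item The sign $v\leq0$ is also what lets $\int hv$ be compared with local quantities without ever looking at the rest of $M$.
\end{itemize}
Without a pointwise Harnack inequality (unavailable here, as the introduction stresses), $\int(\square h)v$ is an unsigned error living on the transition region of $h$. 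Bounding it requires control of $\int|f|\,d\nu$ there, in particular of $\int f^-\,d\nu=\int u\log^+\big((4\pi\tau)^{n/2}u\big)\,dV$. That needs a pointwise upper bound on the conjugate heat kernel. The energy estimate \cref{ineq:energy-estimate} and an $L^2$ bound on $\Theta$ only control derivatives of $f$, so they cannot supply it.

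\paragraph{The same problem in your global formulation.}
The local $\mu$-bound at time $0$ controls $\mathcal W$ of the cut-off density $\phi u/\int\phi u$. Your $\square\phi$-argument gives $\int\phi u\approx1$ at time $0$. But mass concentration is not entropy concentration. A mass $m$ sitting on a set of volume $V$ contributes at most $m\log(V/m)$ to $-\int u\log u$, and this is arbitrarily negative when $V\ll m$. So the tail of $\nu_{\bar x_i,\bar t_i;0}$ can make $\mathcal W_{\bar x_i,\bar t_i}(\bar t_i)$ very negative.

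\paragraph{Your proposed remedy is circular.}
In the proof of \cref{lem:small-Ricci-curvature-integral}, the bound on $\int\!\!\int\tau|\Theta|^2\,d\nu\,dt$ comes from two things. One is \cite[(3.7)]{chen2018shi}, which bounds it by $-\mathcal W_{x_0,t_0}(r^2)$ up to an $|E|^2$ error. The other is the assumed lower bound $\mathcal N_{x_0,t_0}(r^2)\geq-Y$. In your setting, that lower bound at $(\bar x_i,\bar t_i)$ is exactly what localization is supposed to produce. The non-collapsing you take from \cref{thm:no-local-collapsing} in the limit step depends on the same missing bound.

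\paragraph{What is missing.}
You need an independent substitute for $v\leq0$: an a priori Gaussian upper bound on $K(\bar x_i,\bar t_i;\cdot,s)$, or equivalently an a priori lower bound on $\mathcal N_{\bar x_i,\bar t_i}(\bar t_i)$. It must be derived from the time-$0$ hypothesis without using the $\mathcal W$-bound you are after. One conceivable source is a volume lower bound at $(\bar x_i,\bar t_i)$, fed into \cref{thm:no-local-inflating} and the Bamler-type heat kernel bounds that \cite{he2026pseudolocality} extends to this setting; but that volume bound would itself have to be extracted from the time-$0$ assumption. You correctly identify where the difficulty lies, but as written the proposal does not contain this ingredient, so the argument does not close.
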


\section{Backward pseudolocality of Ricci flow}\label{sec:Ricci-flow}
Applying the proof in \S\ref{sec:estimate-on-Nash-entropy}, \S\ref{sec:proof-of-main-theorem} directly to Ricci flow only yields backward pseudolocality under an additional assumption on scalar curvature upper bound.
In this section, we give the necessary modifications needed for proving Bamler's version of backward pseudolocality (\cref{thm:backward-pseudolocality-of-Ricci-flow}).
Besides, many estimates in \S\ref{sec:estimate-on-Nash-entropy} can be reformulated in a cleaner way in the case of Ricci flow.

Let us first recall the general backward pseudolocality theorem for Ricci flow.
\begin{Theorem}[\cite{bamler2020structure} Theorem 2.47]\label{thm:backward-pseudolocality-of-Ricci-flow}
	For any $n\in\N$ and $\alpha>0$, there is an $\epsilon(n,\alpha)>0$ such that the following holds.

	Let $(M^n,g(t))_{t\in I}$ be a Ricci flow on a compact manifold, $(x_0,t_0)\in M\times I$ be a point, and $r>0$ be a scale such that $[t_0-r^2,t_0]\subset I$.
	Suppose
	\begin{equation*}
		|B(x_0,t_0,r)|\geq\alpha r^n,\qquad
		|\Rm|\leq(\alpha r)^{-2}\text{ over }B(x_0,t_0,r).
	\end{equation*}
	Then $|\Rm|\leq(\epsilon r)^{-2}$ over $P(x_0,t_0;(1-\alpha)r,-(\epsilon r)^2)$.
\end{Theorem}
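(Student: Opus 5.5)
Since every Ricci flow is a smooth family of metrics with $E\equiv 0$, the plan is to rerun the scheme of \S\ref{sec:estimate-on-Nash-entropy}--\S\ref{sec:proof-of-main-theorem} with $E=0$. The one hypothesis that does not carry over is the two-sided scalar curvature bound: the Laplacian flow version used $|T_\varphi|^2\leq\Lambda r^{-2}$ to control $R$ from both sides, whereas here we only know $R\geq R_{\min}(t_0-r^2)$-type lower bounds, which come from the maximum principle, $\square R=2|\Ric|^2\geq 0$. As in the $G_2$ case, \cref{thm:backward-pseudolocality-of-Ricci-flow} will follow from an induction over scales as in \cref{lem:induction-over-scales}, applied to small subballs, together with a Ricci flow analogue of \cref{lem:extension}. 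The new input is therefore the following: wherever the $G_2$ proof used smallness of $|R|$, we replace it by the exact identity \cref{eq:Ricci-curvature-integral-in-the-case-of-Ricci-flow}.

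First, I will rescale so that $r=1$, $t_0=0$, and pass to a subball where the curvature bound $|\Rm|\leq\alpha^{-1}$ holds on $P(x_0,0;1,-\alpha^2)$. This is the situation given by the induction hypothesis in \cref{lem:induction-over-scales}. At time $0$, \cref{thm:no-local-inflating} applied to the volume bound gives an entropy lower bound, and \cref{thm:entropy-continuity} then gives $\mathcal N_{y,0}(\rho^2)\geq-\eta/4$ for $y$ in a slightly smaller ball. For Ricci flow, \cref{prop:derivative-of-Nash-entropy-in-time-variable} reads $\partial_t\mathcal N_{y,s}(\tau)\leq \sqrt{n/\tau}\,\mathcal I(y,s;\tau)^{1/2}$; we may take $\Lambda=0$ after the normalization, since $R$ is bounded below on the interval. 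Moreover, \cref{eq:Ricci-curvature-integral-in-the-case-of-Ricci-flow} gives
\[
2\mathcal I(y,s;\tau)=R(y,s)-P_{s-\tau,s}R(y)\leq R(y,s)-R_{\min}.
\]
So no cut-off construction (\cref{lem:small-Ricci-curvature-integral}, \cref{lem:almost-static}) is needed. It suffices to know that $R(y,s)$ is small at scale $\rho$, i.e. $|R|\leq\delta\rho^{-2}$, at those points $(y,s)$ where the entropy is still $\geq-\eta$. Epsilon-regularity (\cref{thm:refined-epsilon-regularity} for Ricci flow, i.e. Bamler's \cite{bamler2020entropy}) provides exactly this, and it also gives $|R|\leq\epsilon\rho^{-2}$ on a forward/backward neighborhood. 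Then
\[
\partial_s\mathcal N_{y,s}(\rho^2)\leq C\sqrt{\epsilon}\,\rho^{-2}.
\]

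Second, I will run the continuity argument of \cref{lem:extension} with this bound. The drop in entropy over the backward time interval $[-\theta,0]$ is at most $C\sqrt\epsilon\,\theta\rho^{-2}$. This is the \textbf{main obstacle}: without a global smallness of $R$, the bound does not improve as the time length grows, so the propagation only works for time $\theta\sim\epsilon^{-1/2}\eta\rho^2$ relative to $\rho$. That is precisely why the Ricci flow statement only claims a curvature bound on the short backward interval $-(\epsilon r)^2$, not $-(1-\alpha)r^2$. To handle it, I would fix $\eta$, then choose $\epsilon$ so small that $C\sqrt\epsilon\,\eta^{-1}$ is tiny, and propagate over a time interval comparable to $\rho^2$. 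Since $\rho$ depends only on $(n,\alpha)$, this suffices. The lower bound $R_{\min}$ over $[-\rho^2,0]$ is controlled because $R\geq -n/(2(t-t_{\rm initial}))$, and we may in any case restrict attention to the window $[t_0-r^2,t_0]$. Here I expect to need care: I will use $R\geq -C(n)r^{-2}$ on $[t_0-\tfrac12 r^2,t_0]$, obtained from the maximum principle on the compact manifold starting at $t_0-r^2$.

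Finally, the entropy bound $\mathcal N_{y,s}(\rho^2)\geq-\eta$ on $B(x_0,0,1-\alpha)\times[-\theta,0]$, combined with epsilon-regularity, gives $|\Rm|\leq\epsilon'\rho^{-2}$ there. Renaming constants and invoking the induction over scales (\cref{lem:induction-over-scales}, whose proof uses only the extension lemma) then yields $|\Rm|\leq(\epsilon r)^{-2}$ on $P(x_0,t_0;(1-\alpha)r,-(\epsilon r)^2)$.
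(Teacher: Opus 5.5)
\paragraph{A gap in the extension step.}
Your overall frame matches the paper's: an induction over scales plus a Ricci-flow version of \cref{lem:extension}. The extension step you propose, however, gains nothing, so the induction cannot close.

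Your entropy-derivative bound comes from $R(y,s)\le\epsilon\rho^{-2}$ (epsilon-regularity at scale $\rho$) and $R\ge -C(n)$. You cannot take $\Lambda=0$, since $R\ge 0$ is not given. By \cref{ineq:bound-on-Nash-entropy-for-Ricci-flow} this gives only $\partial_s\mathcal N_{y,s}(\rho^2)\lesssim\sqrt{\epsilon}\,\rho^{-2}+C(n)\rho^{-1}$. That keeps $\mathcal N\ge-\eta$ only on a backward window of length at most of order $\eta\rho^2/\sqrt{\epsilon}$.

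You also cannot ``fix $\eta$ and then choose $\epsilon$ small''. The smallness produced by epsilon-regularity is dictated by $\eta$ (one needs $\eta\le\lambda(\epsilon)$), and $\rho$ in turn shrinks with $\eta$. Since $\rho\lesssim\alpha$ and nothing makes $\eta/\sqrt\epsilon$ large, your window is no longer than $[-\alpha^2,0]$, where curvature was already assumed bounded. Your output bound $\epsilon'\rho^{-2}$ is also worse than $\alpha^{-1}$.

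Yet \cref{lem:induction-over-scales} needs exactly two things from the extension step:
\begin{itemize}
\item an upgrade from $P(x_0,t_0;r,-(\alpha r)^2)$ to $P(x_0,t_0;(1-\alpha)r,-(1-\alpha)r^2)$;
\item the non-degrading constant $\alpha r^{-2}+\sup_{B(x_0,t_0,r)}|\Rm|$.
\end{itemize}
Relatedly, the short interval $-(\epsilon r)^2$ in the theorem is not caused by a limitation of entropy propagation. It comes from working at scales much smaller than the curvature scale. There the hypotheses $R(x_0,t_0)\le\epsilon r^{-2}$ and $R(\cdot,t_0-r^2)\ge-\epsilon r^{-2}$ of \cref{lem:Bamler-Lemma-26-1} hold automatically.

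\paragraph{The missing idea.}
What you need is a bound on $\partial_s\mathcal N_{y,s}(\rho^2)$ that is small in absolute (scale-one) terms, uniformly over a time interval of order $r^2$. The paper obtains it from those base-point scalar curvature hypotheses, in four steps.
\begin{enumerate}
\item \cref{eq:Ricci-curvature-integral-in-the-case-of-Ricci-flow} gives the single weighted bound $\mathcal I(x_0,t_0;r^2)\le\epsilon r^{-2}$, which is \cref{ineq:smallness-of-Ricci-curvature-integral-for-Ricci-flow}.
\item This is converted into pointwise smallness $\sup_{A\times[t_*,t_c]}|\Ric|\le C(\epsilon\mid n,\alpha,t_c)$, which is \cref{ineq:smallness-of-Ricci}. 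The inputs are Perelman's heat kernel lower bound $K(x_0,0;z,u)\ge c(n,\alpha,t_c)$ on small parabolic neighbourhoods below points $(y,s)$ controlled by epsilon-regularity, together with noncollapsing and Shi-type bounds on $\nabla\Ric$ and $\partial_t\Ric$.
\item Now $R(y,s)$ itself is small, so \cref{ineq:bound-on-Nash-entropy-for-Ricci-flow} yields $\partial_s\mathcal N_{y,s}(\rho^2)\le C(\epsilon\mid n,\alpha,t_c)$, and the continuity argument reaches $-(1-\alpha)$. The propagation starts at $t_c<0$; the window $[t_c,0]$ is handled by $|\Rm(y,t_c)-\Rm(y,0)|\le C|t_c|$.
\item The small $\Ric$ combined with \cref{lem:interpolation} makes $\frac{d}{dt}|\Rm|$ small. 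This almost-static control is what produces the non-degrading curvature bound the induction requires, and your proposal has no substitute for it.
\end{enumerate}
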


Our proof still needs Bamler's induction over scales argument \cite[\S 26]{bamler2020structure}, which reduces the problem to showing:
\begin{Lemma}[\cite{bamler2020structure} Lemma 26.1]\label{lem:Bamler-Lemma-26-1}
	For every $\alpha>0$, there is an $\epsilon(n,\alpha)>0$ such that the following holds.

	Let $(M^n,g(t))_{t\in I}$ be a Ricci flow on a compact manifold, $(x_0,t_0)\in M\times I$ be a point, and $r>0$ be a scale such that $[t_0-r^2,t_0]\subset I$.
	Suppose
	\begin{enumerate}
		\item \label{lem-it:non-collapsing} $|B(x_0,t_0,r)|\geq\alpha r^n$;
		\item \label{lem-it:curvature-bound} $|\Rm|\leq\alpha^{-1}r^{-2}$ on $P(x_0,t_0;r,-(\alpha r)^2)$;
		\item \label{lem-it:scalar-curvature-upper-bound} $R(x_0,t_0)\leq\epsilon r^{-2}$;
		\item \label{lem-it:scalar-curvature-lower-bound} $R(\cdot,t_0-r^2)\geq-\epsilon r^{-2}$.
	\end{enumerate}
	Then $|\Rm|\leq\alpha r^{-2}+\sup_{B(x_0,t_0,r)}|\Rm|$ and $|\Ric|\leq\alpha r^{-2}$ on $P(x_0,t_0;(1-\alpha)r,-(1-\alpha)r^2)$.
\end{Lemma}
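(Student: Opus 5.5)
Normalize $t_0=0$, $r=1$ and run the scheme of \cref{lem:extension}, with the Ricci-flow identity \cref{eq:Ricci-curvature-integral-in-the-case-of-Ricci-flow} replacing the global torsion bound. For Ricci flow $E\equiv0$, and the maximum principle together with hypothesis (\ref{lem-it:scalar-curvature-lower-bound}) gives $R\geq-\epsilon$ on $M\times[-1,0]$. Hence \cref{prop:derivative-of-Nash-entropy-in-time-variable} applies with $\Lambda=\epsilon$ and yields
\begin{equation*}
	\partial_s\mathcal N_{y,s}(\rho^2)\leq\frac{\sqrt n+C(\epsilon\rho^2\mid n)}{\rho}\,\mathcal I(y,s;\rho^2)^{1/2}+\frac{C(\epsilon\rho^2\mid n)}{\rho^2}.
\end{equation*}
The whole task is therefore to make $\mathcal I(y,s;\rho^2)$ small for $(y,s)\in A\times[-(1-\alpha),0]$, where $A:=\overline{B(x_0,0,1-\alpha/2)}$, without a global scalar curvature upper bound.

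\textbf{Step 1 (scalar curvature control).} Since $\square R=2|\Ric|^2\geq0$, the function $R+\epsilon$ is a nonnegative supersolution. I would bound $R(y,s)$ from above for $(y,s)$ near $x_0$ in the past, using hypothesis (\ref{lem-it:scalar-curvature-upper-bound}) and \cref{eq:Duhamel-formula}:
\begin{equation*}
	P_{s,0}R_s(x_0)\leq R(x_0,0)\leq\epsilon .
\end{equation*}
Because $R_s\geq-\epsilon$, this makes $R_s+\epsilon$ small in $L^1(\nu_{x_0,0;s})$. On the curvature-controlled region the conjugate heat kernel $\nu_{x_0,0;s}$ has density bounded below on balls of fixed size, by Gaussian bounds together with \cref{thm:no-local-collapsing} and \cref{thm:no-local-inflating}. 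So $R_s+\epsilon$ is small in an $L^1$ sense on $B(x_0,s,c)$. Hypothesis (\ref{lem-it:curvature-bound}), Shi's estimates and interpolation then upgrade this to pointwise smallness of $R$ on $P(x_0,0;c,-c)$ for some small $c(n,\alpha)$.

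\textbf{Step 2 (propagation).} Next I would run the continuity argument of \cref{lem:extension}, with threshold time
\begin{equation*}
	t_*:=\inf\{t\in[-(1-\alpha),0]:\ \mathcal N_{y,s}(\rho^2)\geq-\eta\ \text{ for all }(y,s)\in A\times[t,0]\}.
\end{equation*}
Before $t_*$, \cref{thm:refined-epsilon-regularity} bounds the curvature near each such $(y,s)$. The remaining issue is that $\mathcal I$ via \cref{eq:Ricci-curvature-integral-in-the-case-of-Ricci-flow} equals $\tfrac12\bigl(R(y,s)-P_{s-\rho^2,s}R(y)\bigr)$, which is small provided $R(y,s)$ is small. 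The lower bound $R\geq-\epsilon$ handles the second term. For the first term I would transport smallness of $R$ from $x_0$ to $y$ along the good region using the curvature bounds from epsilon-regularity. Concretely, Step 1 applied at smaller scales shows $\Ric$ is small wherever $R$ is small. Then \cref{lem:almost-static}(\ref{lem-it:almost-static-curvature-bound}), through \cref{ineq:curvature-Lipschitz-in-time-variable}, gives almost-staticity, so smallness of $R$ at time $0$ on $B(x_0,0,1)$ persists backward. That time-$0$ smallness comes from $|\Ric|$ being controlled at $t=0$: the same Duhamel/heat kernel argument of Step 1 is applied with base point $(y,0)$ instead of $(x_0,0)$, using $P_{s,0}R_s(y)\leq R(y,0)$.

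\textbf{Main obstacle.} The hardest step is obtaining $R(y,0)$ small for all $y\in A$ from only the single-point bound (\ref{lem-it:scalar-curvature-upper-bound}). My first attempt is a Harnack-type comparison of $\nu_{x_0,0;s}$ and $\nu_{y,0;s}$ on the bounded-curvature region $P(x_0,0;1,-\alpha^2)$, via Gaussian bounds and the local curvature bound. This should make $P_{-\alpha^2,0}(R+\epsilon)(y)$ small. One then needs $\int_{-\alpha^2}^0 P_{a,0}|\Ric|^2(y)\,da$ small, and hence, via almost-static interpolation, $R(y,0)$ small. With $\mathcal I$ small, \cref{prop:derivative-of-Nash-entropy-in-time-variable} gives $\partial_s\mathcal N\leq C(\epsilon\mid\alpha)$, which closes the continuity argument. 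A final application of \cref{lem:almost-static}(\ref{lem-it:almost-static-curvature-bound}) gives both conclusions $|\Ric|\leq\alpha$ and $|\Rm|\leq\alpha+\sup_{B(x_0,0,1)}|\Rm|$.
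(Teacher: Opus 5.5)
There is a genuine gap, and it sits at the step you flag as the main obstacle. Your route needs $R(y,s)$, equivalently $\mathcal I(y,s;\rho^2)$, to be small in terms of $\epsilon$ for all $(y,s)\in A\times[t_*,0]$, including times $s$ near $0$. Neither of your mechanisms delivers this.

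In Step 1, the density of $\nu_{x_0,0;s}$ is \emph{not} bounded below on balls of fixed size as $s\to0$. It concentrates at scale $\sqrt{|s|}$ and has Gaussian decay like $|s|^{-n/2}e^{-d^2/(C|s|)}$ at distance $d$. So smallness of $R_s+\epsilon$ in $L^1(\nu_{x_0,0;s})$ says nothing, as stated, about $R$ at $(z,s)$ with $z$ at fixed distance from $x_0$ and $|s|$ small.

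The same-time comparison of $\nu_{x_0,0;s}$ and $\nu_{y,0;s}$ in your last paragraph fails for the same reason: as $s\to0$ the two kernels concentrate at different points. A local parabolic Harnack inequality only compares a base point $(y,s')$ with $s'<0$ against $(x_0,0)$, not two base points at the same time. Moreover, even granting that $P_{-\alpha^2,0}(R+\epsilon)(y)$ is small, your next requirement is that $\int_{-\alpha^2}^0P_{a,0}|\Ric|^2(y)\,da$ be small. By \cref{eq:Ricci-curvature-integral-in-the-case-of-Ricci-flow} this is the same as $R(y,0)$ being small, so the argument is circular.

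Your appeal to \cref{lem:almost-static}(\ref{lem-it:almost-static-curvature-bound}), both to make smallness ``persist backward'' and in your final step, is circular too. That lemma assumes $|R|\leq\lambda$ on the whole backward parabolic neighborhood, and a scalar curvature upper bound off the single point $(x_0,0)$ is exactly what is unavailable here.

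The missing idea is that you never need $\epsilon$-smallness near $t=0$. The paper's proof proceeds as follows.
\begin{enumerate}
\item Fix $t_c\in(-\alpha^2/4,0)$ and start the continuity argument at $t_c$; the entropy lower bound there comes from hypotheses (\ref{lem-it:non-collapsing}) and (\ref{lem-it:curvature-bound}). Then work only on $A\times[t_*,t_c]$.
\item On that region, get smallness of $\Ric$ directly from the single integral $\mathcal I(x_0,0;1)\leq\epsilon$ in \cref{ineq:smallness-of-Ricci-curvature-integral-for-Ricci-flow}. The tool is the pointwise lower bound $K(x_0,0;z,u)\geq c(n,\alpha,t_c)$ on $B(y,s,\delta)\times[s-2\delta^2,s-\delta^2]$. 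It comes from Perelman's reduced-distance bound along a curve from $(x_0,0)$ to $(y,s)$ through the region where curvature is controlled: by hypothesis (\ref{lem-it:curvature-bound}) near $t=0$, and by $\epsilon$-regularity before $t_c$. The time separation $|t_c|>0$ is exactly what makes this lower bound uniform.
\item Volume noncollapsing together with Shi bounds on $\nabla\Ric$ and $\partial_t\Ric$ then give $|\Ric|\leq C(\epsilon\mid n,\alpha,t_c)$ on $A\times[t_*,t_c]$. This feeds \cref{ineq:bound-on-Nash-entropy-for-Ricci-flow} and closes the continuity argument, and interpolation then controls $\frac{d}{dt}|\Rm|$ there.
\item Bridge the interval $[t_c,0]$ with the crude bound $|\Rm(y,t_c)-\Rm(y,0)|\leq C(n,\alpha)|t_c|$, which follows from hypothesis (\ref{lem-it:curvature-bound}) and Shi's estimates. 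Choose $t_c$ small first, then $\epsilon$.
\end{enumerate}
The conclusion at $t=0$ thus only needs the tolerance $\alpha$, not smallness in $\epsilon$. No comparison between kernels based at different points is needed, and no use of \cref{lem:almost-static}(\ref{lem-it:almost-static-curvature-bound}).
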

Note the scalar curvature upper bound is only assumed at $(x_0,t_0)$. (By maximum principle the lower bound holds on the whole spacetime in consideration.)
So our previous \cref{lem:almost-static}(\ref{lem-it:almost-static-curvature-bound}) breaks down in this case.

We first give a cleaner reformulation of the results in \S\ref{sec:estimate-on-Nash-entropy} for Ricci flow.
Let $(M^n,g(a))_{a\in I}$ be a Ricci flow on a compact manifold and $s<t$ be such that $[s,t]\subset I$.
Denote $\tau:=t-s$.
Recall \cref{eq:Ricci-curvature-integral-in-the-case-of-Ricci-flow} asserts $2\mathcal I(x,t;\tau)=R(x,t)-P_{s,t}R_s(x)$.

\begin{itemize}
	\item (\cref{prop:commutator}). For any $\phi\in C^\infty(M)$, we have
	\begin{equation*}
		|\Delta_tP_{s,t}\phi-P_{s,t}\Delta_s\phi|^2
		\leq\big(  R(x,t)-P_{s,t}R_s(x)  \big)
		\left(  P_{s,t}|\nabla\phi|_s^2(x)-|\nabla P_{s,t}\phi|_t^2(x)  \right).
	\end{equation*}
	Here we use \cref{eq:Ricci-curvature-integral-in-the-case-of-Ricci-flow} and the fact \cite[(3.8)]{hein2014new}
	\begin{equation*}
		2\int_s^t  P_{a,t}|\nabla^2P_{s,a}\phi|_a^2(x)  \,da
		=P_{s,t}|\nabla\phi|_s^2(x)-|\nabla P_{s,t}\phi|_t^2(x).
	\end{equation*}
	\item (\cref{prop:derivative-of-Nash-entropy-in-time-variable}). Denote $\overline R:=P_{s,t}R_s(x)$ for simplicity.
	Then
	\begin{equation*}
		\left|\partial_t\mathcal N_{x,t}(\tau)+\overline R\right|
		\leq\sqrt{\big(R(x,t)-\overline R\big)\left(\frac{n}{2\tau}-\overline R\right)}.
	\end{equation*}
	In particular, if $R\geq-\lambda$, then
	\begin{equation}\label{ineq:bound-on-Nash-entropy-for-Ricci-flow}
		\partial_t\mathcal N_{x,t}(\tau)
		\leq\lambda+\sqrt{\big(R(x,t)+\lambda\big)\left(\frac{n}{2\tau}+\lambda\right)}.
	\end{equation}
	\item (\cref{lem:small-Ricci-curvature-integral} and \cref{lem:almost-static}(\ref{lem-it:existence-of-cut-off})). In this case, no cut-off is needed to obtain the weighted Ricci energy bound.
	It is replaced by the identity \cref{eq:Ricci-curvature-integral-in-the-case-of-Ricci-flow}.
	More precisely, \cref{lem:Bamler-Lemma-26-1}(\ref{lem-it:scalar-curvature-upper-bound})(\ref{lem-it:scalar-curvature-lower-bound})  and \cref{eq:Ricci-curvature-integral-in-the-case-of-Ricci-flow} imply
	\begin{equation}\label{ineq:smallness-of-Ricci-curvature-integral-for-Ricci-flow}
		\mathcal I(x_0,t_0;r^2)=\tfrac{1}{2}\big(R(x_0,t_0)-P_{t_0-r^2,t_0}R_{t_0-r^2}(x_0)\big)\leq\epsilon r^{-2}.
	\end{equation}
	This suffices for our purpose.
	\item (\cref{lem:almost-static}(\ref{lem-it:almost-static-curvature-bound})). As mentioned before, this cannot be applied directly in the absence of a scalar curvature upper bound.
\end{itemize}

Now we show how to use \cref{ineq:bound-on-Nash-entropy-for-Ricci-flow}, \cref{ineq:smallness-of-Ricci-curvature-integral-for-Ricci-flow}, Perelman's heat kernel lower bound, and the results in \cite{bamler2020entropy} to propagate smallness of pointed Nash entropy backward as in the proof of \cref{lem:extension}.
\begin{proof}[Schematic of proof of \cref{lem:Bamler-Lemma-26-1}]
	We may assume $t_0=0$ and $r=1$.
	Set $A:=\overline{B(x_0,0,1-\tfrac{\alpha}{2})}$ and fix $t_c\in(-\alpha^2/4,0)$.
	By (\ref{lem-it:non-collapsing})(\ref{lem-it:curvature-bound}) and \cite[Theorem 10.4]{bamler2020entropy}, there is $\rho(n,\alpha)>0$ such that
	\begin{equation*}
		\mathcal N_{y,t_c}(\rho^2)\geq-\eta/4
	\end{equation*}
	for all $y\in A$.
	Here $\eta(n)>0$ denotes the dimensional constant in the epsilon-regularity theorem \cite[Theorem 10.2]{bamler2020entropy}.
	Let
	\begin{equation*}
		t_*:=\inf\left\{t\in[-(1-\alpha),t_c]:\mathcal N_{y,s}(\rho^2)\geq-\eta\text{ for all }(y,s)\in A\times[t,t_c]\right\}.
	\end{equation*}
	The epsilon-regularity supplies curvature bound and Perelman's heat kernel lower bound \cite[Corollary 9.5]{perelman2002entropy} then gives (for example, by considering a curve traveling from $(x_0,0)$ to $(y,s)$ first and then to $(z,u)$)
	\begin{equation*}
		K(x_0,0;z,u)\geq c(n,\alpha,t_c)>0,\qquad\text{for all }(z,u)\in B(y,s,\delta)\times[s-2\delta^2,s-\delta^2]
	\end{equation*}
	for some $\overline\delta(n,\alpha)>0$ and all $\delta\in(0,\overline{\delta})$.
	The curvature bound and assumption (\ref{lem-it:non-collapsing}) also give a uniform lower bound on volume ratio.
	So it follows from \cref{ineq:smallness-of-Ricci-curvature-integral-for-Ricci-flow} and the bounds on $\nabla\Ric$, $\partial_t\Ric$ (obtained from Shi-type estimates) that
	\begin{equation}\label{ineq:smallness-of-Ricci}
		\sup_{A\times[t_*,t_c]}|\Ric|\leq C(\epsilon\mid n,\alpha,t_c).
	\end{equation}
	This provides the scalar curvature upper bound needed for the application of \cref{ineq:bound-on-Nash-entropy-for-Ricci-flow}, which implies
	\begin{equation*}
		\partial_s\mathcal N_{y,s}(\rho^2)\leq C(\epsilon\mid n,\alpha,t_c)
	\end{equation*}
	for all $(y,s)\in A\times[t_*,t_c]$.
	Taking $\epsilon$ sufficiently small forces $t_*=-(1-\alpha)$.
	Interpolating between \cref{ineq:smallness-of-Ricci} and the higher order derivative bounds on curvature (\cref{lem:interpolation}) gives
	\begin{equation*}
		\left|\frac{d}{dt}|\Rm|\right|
		\leq C(n)\big(|\nabla^2\Ric|+|\Rm|\,|\Ric|\big)
		\leq C(\epsilon\mid n,\alpha,t_c)
	\end{equation*}
	over $A\times[-(1-\alpha),t_c]$.
	Clearly, $|\Rm(y,t_c)-\Rm(y,0)|\leq C(n,\alpha)|t_c|$ and similar for $\Ric$.
	The final results follow by taking $t_c$ sufficiently close to $0$ first and then $\epsilon$ sufficiently small.
\end{proof}

\newpage
\bibliography{backward_pseudolocality}
\end{document}